\documentclass[12pt,reqno]{amsart}
\usepackage{amsfonts, amsthm, amsmath}
\allowdisplaybreaks[4]
\usepackage{rotating}
\usepackage{tikz}
\usepackage{graphics}
\usepackage{amssymb}
\usepackage{amscd}
\usepackage{t1enc}
\usepackage[mathscr]{eucal}
\usepackage{indentfirst}
\usepackage{graphicx}
\usepackage{graphics}
\usepackage{pict2e}
\usepackage{mathrsfs}
\usepackage{enumerate}
\usepackage[pagebackref]{hyperref}
\hypersetup{backref, pagebackref, colorlinks=true}
\usepackage{cite}
\usepackage{color}
\usepackage{epic}
\usepackage{hyperref} 
\numberwithin{equation}{section}
\topmargin 0.8in
\textheight=8.2in
\textwidth=6.4in
\voffset=-.68in
\hoffset=-.68in

\theoremstyle{plain}

\newtheorem{theorem}{Theorem}[section]

\newtheorem{lemma}[theorem]{Lemma}

\newtheorem{corollary}[theorem]{Corollary}

\theoremstyle{definition}

\newtheorem{?}[theorem]{Problem}

\def\boxit#1{\leavevmode\hbox{\vrule\vtop{\vbox{\kern.33333pt\hrule
    \kern1pt\hbox{\kern1pt\vbox{#1}\kern1pt}}\kern1pt\hrule}\vrule}}

\usepackage{collectbox}

\makeatletter

\makeatother




\newcommand{\f}[1]{\ifthenelse{\equal{#1}{1}}{(q;q)_\infty}{(q^{#1};q^{#1})_{\infty}}}

\setbox0=\hbox{$+$}
\newdimen\plusheight
\plusheight=\ht0
\def\+{\;\lower\plusheight\hbox{$+$}\;}

\setbox0=\hbox{$-$}
\newdimen\minusheight
\minusheight=\ht0
\def\-{\;\lower\minusheight\hbox{$-$}\;}

\setbox0=\hbox{$\cdots$}
\newdimen\cdotsheight
\cdotsheight=\plusheight
\def\cds{\lower\cdotsheight\hbox{$\cdots$}}


\begin{document}

\title[Vanishing coefficients]{Vanishing coefficients in several $q$-series expansions related to the Rogers--Ramanujan continued fraction}

\author[S. Chern]{Shane Chern}
\address[Shane Chern]{Department of Mathematics, Penn State University, University Park, PA 16802, USA}
\email{shanechern@psu.edu}

\author[D. Tang]{Dazhao Tang}
\address[Dazhao Tang]{Center for Applied Mathematics, Tianjin University, Tianjin 300072, P.R. China}
\email{dazhaotang@sina.com}

\date{\today}

\begin{abstract}
	In this paper, we investigate several infinite products with vanishing Taylor coefficients in arihmetic progressions. These infinite products are closely related to the Rogers--Ramanujan continued fraction. Moreover, a handful of new identities involving Ramanujan's parameters will be established.
\end{abstract}

\subjclass[2010]{11F27, 30B10}

\keywords{Vanishing coefficients, Rogers--Ramanujan continued fraction, Ramanujan's parameters, 
$q$-series identities.}

\maketitle

\section{Introduction}

Throughout this paper, the following customary $q$-series notation will be adopted:
\begin{align*}
(A;q)_n:\!&=\prod_{k= 0}^{n-1} (1-Aq^k),\\
(A;q)_\infty:\!&=\prod_{k= 0}^\infty (1-Aq^k),\\
(A_1,A_2,\ldots,A_n;q)_\infty:\!&=(A_1;q)_\infty (A_2;q)_\infty \cdots (A_n;q)_\infty,\\
\left(\begin{matrix}
A_1,A_2,\ldots,A_n\\
B_1,B_2,\ldots,B_m
\end{matrix};q\right)_\infty:\!&=\frac{(A_1;q)_\infty (A_2;q)_\infty \cdots (A_n;q)_\infty}{(B_1;q)_\infty (B_2;q)_\infty \cdots (B_m;q)_\infty}.
\end{align*}
For notational convenience, we also write
$$E(q):=(q;q)_\infty.$$
Let $G(q)$ and $H(q)$ be the Rogers--Ramanujan functions defined respectively by
\begin{align*}
G(q) &=\frac{1}{(q,q^4;q^5)_\infty}\\
\intertext{and}
H(q) &=\frac{1}{(q^2,q^3;q^5)_\infty}.
\end{align*}
The Rogers--Ramanujan continued fraction (with a factor of $q^{1/5}$ dropped off)
$$R(q)=\frac{1}{1}\+\frac{q}{1}\+
\frac{q^2}{1}\+\frac{q^3}{1}\+\cds$$
can be represented as the quotient of $H(q)$ and $G(q)$:
\begin{equation*}
R(q)=\frac{H(q)}{G(q)}=\left(\begin{matrix}
q,q^4\\
q^2,q^3
\end{matrix};q^5\right)_\infty.
\end{equation*}

In a recent paper of the authors \cite{CT2020}, we established 5-dissection formulas for Ramanujan's parameter
\begin{equation}\label{eq:para-k}
k(q):=qR(q)R(q^2)^2,
\end{equation}
its companion
\begin{equation}\label{eq:para-kappa}
\varkappa(q):=\frac{R(q)^2}{R(q^2)},
\end{equation}
and their reciprocals $k(q)^{-1}$ and $\varkappa(q)^{-1}$. These results can be treated as follow-ups of Hirschhorn's 5-dissections of $R(q)$ and $R(q)^{-1}$ (see \cite{Hir1998}). Notice that Ramanujan also introduced another two parameters in his lost notebook \cite[p.~13]{AB2005}:
\begin{gather}
\mu(q):=qR(q)R(q^4)
\intertext{and}
\nu(q):=\frac{R(q^{1/2})^2 R(q)}{R(q^2)}.
\end{gather}
Although the two parameters and their reciprocals do not have simple 5-dissection formulas, our numerical experiment reveals that both $\nu(q^2)$ and $\nu(q^2)^{-1}$ join the shortlist of infinite products with vanishing coefficients in arithmetic progressions. Before stating these results, let us briefly review the history of coefficient-vanishing infinite products.

In 1978, Richmond and Szekeres \cite{RS1978} first considered the infinite product
\begin{equation}\label{eq:RS}
\sum_{n=0}^\infty c(n)q^n = \left(\begin{matrix}
q^3,q^5\\
q,q^7
\end{matrix};q^8\right)_\infty
\end{equation}
and its reciprocal. One result shown by them states that $c(4n+3)$ is always zero. This paper then leaded to the work of Andrews and Bressoud \cite{AB1979}, in which it was proved that the following general infinite product
\begin{equation}\label{eq:AB}
\left(\begin{matrix}
q^r,q^{2k-r}\\
q^{k-r},q^{k+r}
\end{matrix};q^{2k}\right)_\infty
\end{equation}
shares the same coefficient-vanishing nature whenever $1\le r\le k-1$ with $k$ and $r$ coprime and of opposite parity. Other studies of coefficient-vanishing infinite products were then carried out by Alladi and Gordon \cite{AG1994}, Mc Laughlin \cite{Mac2015}, Hirschhorn \cite{Hir2018}, the second author \cite{Tan2019, Tan2020}, Baruah and Kaur \cite{BK2019}, and Dou and Xiao \cite{DX2019}.

Now we are at the position of stating the vanishing coefficients in $\nu(q^2)$ and $\nu(q^2)^{-1}$. Let us define
\begin{align}
\sum_{n=0}^\infty\alpha(n)q^n &=\nu(q^2)=\dfrac{(q,q^4;q^5)_\infty^2(-q^4,-q^6;q^{10})_\infty}
{(q^2,q^3;q^5)_\infty^2(-q^2,-q^8;q^{10})_\infty}\label{def-alpha}\\
\intertext{and}
\sum_{n=0}^\infty\beta(n)q^n &=\frac{1}{\nu(q^2)}=\dfrac{(q^2,q^3;q^5)_\infty^2(-q^2,-q^8;q^{10})_\infty}
{(q,q^4;q^5)_\infty^2(-q^4,-q^6;q^{10})_\infty}.\label{def-beta}
\end{align}

\begin{theorem}\label{THM-vanishing}
	For any $n\geq0$,
	\begin{align}
	\alpha(10n+3) &=\alpha(10n+7)=0\label{vanish-1}
	\intertext{and}
	\beta(10n+3) &=\beta(10n+7)=0.\label{vanish-2}
	\end{align}
\end{theorem}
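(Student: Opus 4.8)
The plan is to first reduce the theorem to the single, already-understood parameter $\varkappa$. Recall $\nu(q)=R(q^{1/2})^2R(q)/R(q^2)$, so that $\nu(q^2)=R(q)^2R(q^2)/R(q^4)$. Since $\varkappa(q)=R(q)^2/R(q^2)$, a direct computation gives
\begin{equation*}
\varkappa(q)\varkappa(q^2)=\frac{R(q)^2}{R(q^2)}\cdot\frac{R(q^2)^2}{R(q^4)}=\frac{R(q)^2R(q^2)}{R(q^4)}=\nu(q^2).
\end{equation*}
First I would confirm that this matches the product in \eqref{def-alpha}: the factor $R(q)^2$ supplies $(q,q^4;q^5)_\infty^2/(q^2,q^3;q^5)_\infty^2$, while rewriting $R(q^2)/R(q^4)$ with the help of $(-x;Q)_\infty=(x^2;Q^2)_\infty/(x;Q)_\infty$ reproduces the factor $(-q^4,-q^6;q^{10})_\infty/(-q^2,-q^8;q^{10})_\infty$. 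Taking reciprocals gives $1/\nu(q^2)=\varkappa(q)^{-1}\varkappa(q^2)^{-1}$, matching \eqref{def-beta}. This turns the problem into a statement about products of shifted copies of $\varkappa$ and $\varkappa^{-1}$, whose $5$-dissections were established in \cite{CT2020}.

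Next I would feed in those dissections. Write the $5$-dissection from \cite{CT2020} as $\varkappa(q)=\sum_{i=0}^4 q^i A_i(q^5)$, and refine it to a $10$-dissection by splitting each component into its even and odd parts, $A_i(q^5)=P_i(q^{10})+q^5Q_i(q^{10})$. Since $\varkappa(q^2)=\sum_{j=0}^4 q^{2j}A_j(q^{10})$ involves only even powers, multiplying out
\begin{equation*}
\nu(q^2)=\sum_{i,j=0}^4\bigl(q^{i+2j}P_i(q^{10})+q^{i+2j+5}Q_i(q^{10})\bigr)A_j(q^{10})
\end{equation*}
and collecting the exponents in a fixed class modulo $10$ is purely mechanical. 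The class $3$ receives contributions exactly from the pairs $(i,j)$ with $i+2j\equiv3$ (through $P_i$) and with $i+2j\equiv8$ (through $Q_i$), and similarly for the class $7$.

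Carrying out this bookkeeping, the vanishing $\alpha(10n+3)=0$ reduces to the single identity (in the variable $q^{10}$)
\begin{align*}
P_3A_0+P_1A_1+Q_4A_2+Q_2A_3+Q_0A_4&=0,
\end{align*}
while $\alpha(10n+7)=0$ reduces to
\begin{align*}
P_3A_2+P_1A_3+Q_2A_0+Q_0A_1+Q_4A_4&=0.
\end{align*}
The cancellation is already visible at lowest order: writing $\varkappa(q)=\sum_n\varkappa_nq^n$, one has $\alpha(3)=\varkappa_3\varkappa_0+\varkappa_1^2=-4+4=0$. The treatment of $\beta$ is identical, using the dissection of $\varkappa^{-1}$ in place of that of $\varkappa$.

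The hard part will be proving these component identities. Because each $A_i$, and hence each $P_i$ and $Q_i$, is an explicit theta/eta quotient, the displayed relations are genuine $q$-series identities rather than formal consequences of the dissection, and — since all of the residues $0,\dots,4$ actually occur in $\varkappa$ — no cheap parity or support argument is available. I expect the cleanest route is to express every component through Ramanujan's parameters $k(q^5)$ and $\varkappa(q^5)$ (the form in which \cite{CT2020} casts the dissections), so that each identity collapses to a polynomial relation in these parameters verifiable from the known modular relations they satisfy; establishing those relations is where the real work lies.
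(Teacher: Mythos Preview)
Your overall strategy --- factor $\nu(q^2)=\varkappa(q)\varkappa(q^2)$ and multiply the $5$-dissections from \cite{CT2020} --- is exactly the route the paper takes for \eqref{vanish-1}; see \eqref{iden-5A}. Two points, however. First, your displayed ``reduced'' identities are not quite right: the terms coming from pairs $(i,j)$ with $i+2j+5\ge 10$ (for instance $Q_4A_2$, $Q_2A_3$, $Q_0A_4$ in the residue-$3$ case, or $Q_4A_4$ in the residue-$7$ case) carry an extra factor of the dissection variable, so the identities you wrote are off by a power of $q^{10}$ on those summands. Second, and more importantly, you correctly identify ``proving these component identities'' as the hard part but do not do it; the paper's entire Section~\ref{sect:final-thm} is devoted precisely to this, and it is not a routine verification. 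The argument there extracts the residues $2$ and $3$ modulo $5$ from \eqref{iden-5A} and then simplifies each to a single theta-quotient in $q^2$ (equations \eqref{A2} and \eqref{A3}) using the $G$--$H$ relations \eqref{Hir-iden-1}--\eqref{Hir-iden-3}, \eqref{key-iden-1}, and crucially the new auxiliary identity \eqref{0-iden}, which in turn rests on the identity \eqref{CT-iden-5} established in Theorem~\ref{th:iden-3}. Your suggestion to reduce everything to polynomial relations in $k(q^5)$ and $\varkappa(q^5)$ is plausible in spirit but not how the paper proceeds, and the dissection components (see \eqref{CT-iden-1}) are expressed in $G,H,E$ rather than in $k,\varkappa$, so that route would require additional translation work you have not indicated.

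For $\beta$, your plan to rerun the whole computation with $\varkappa^{-1}$ in place of $\varkappa$ would work but is unnecessary. The paper instead proves the closed form $\nu(q^2)^{-1}-\nu(q^2)=4qE(q^4)E(q^{20})^3/E(q^{10})^4$ (equation \eqref{inter-iden-1}) and observes via Euler's pentagonal number theorem that the right side has no terms $q^{5n+2}$ or $q^{5n+3}$; this gives $\alpha(5n+2)=\beta(5n+2)$ and $\alpha(5n+3)=\beta(5n+3)$ (Corollary~\ref{cor-inter-iden}), so \eqref{vanish-2} follows from \eqref{vanish-1} for free.
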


In \cite{AB1979}, Andrews and Bressoud not only demonstrated the coefficient-vanishing property of \eqref{eq:AB}, but also obtained its $k$-dissection in terms of Lambert series. In a more recent paper of Mc Laughlin \cite{McL2019}, by the Jordan--Kronecker identity \cite[Eq.~(28.1.1)]{Hir2017}, which is a special case of Ramanujan's ${}_1\psi_1$ summation formula \cite[Eq.~(1.5)]{McL2019}, it was shown that
\begin{align}\label{Mcl-iden}
\left(\begin{matrix}
q,q,az,q/(az)\\
a,q/a,z,q/z
\end{matrix};q\right)_\infty=\sum_{j=0}^{p-1}z^j
\left(\begin{matrix}
q^p,q^p,aq^jz^p,q^{p-j}/(az^p)\\
aq^j,q^{p-j}/a,z^p,q^p/z^p
\end{matrix};q^p\right)_\infty,
\end{align}
where $p$ is a fixed positive integer; see \cite[Proposition 2.1]{McL2019}. Taking $(a,z,q)\to (q^k,q^{k-r},q^{2k})$ and $p= k$ in \eqref{Mcl-iden} shall further simplify the $k$-dissection of \eqref{eq:AB} in terms of infinite products:
\begin{align}\label{Mcl-iden-new}
&\left(\begin{matrix}
q^r,q^{2k-r}\\
q^{k-r},q^{k+r}
\end{matrix};q^{2k}\right)_\infty\left(\begin{matrix}
q^{2k},q^{2k}\\
q^k,q^k
\end{matrix};q^{2k}\right)_\infty\notag\\
&\quad=\sum_{j=0}^{k-1}q^{j(k-r)}
\left(\begin{matrix}
q^{2k^2},q^{2k^2},q^{k(k-r+2j+1)},q^{2k^2-k(k-r+2j+1)}\\
q^{k(2j+1)},q^{2k^2-k(2j+1)},q^{k(k-r)},q^{2k^2-k(k-r)}
\end{matrix};q^{2k^2}\right)_\infty.
\end{align}

Equipped with \eqref{Mcl-iden-new}, we find another two infinite products with vanishing coefficients in arithmetic progressions:
\begin{align}
\sum_{n=0}^\infty\gamma(n)q^n &=\dfrac{(-q,-q^4;q^5)_\infty^2(q^4,q^6;q^{10})_\infty}
{(-q^2,-q^3;q^5)_\infty^2(q^3,q^7;q^{10})_\infty}=\frac{R(q^2)}{R(q)^2}\left(\begin{matrix}
q^2,q^8\\
q^3,q^7
\end{matrix};q^{10}\right)_\infty\label{eq:theta}\\
\intertext{and}
\sum_{n=0}^\infty\delta(n)q^n &=\dfrac{(-q^2,-q^3;q^5)_\infty^2(q^2,q^8;q^{10})_\infty}
{(-q,-q^4;q^5)_\infty^2(q,q^9;q^{10})_\infty}=\frac{R(q)^2}{R(q^2)}\left(\begin{matrix}
q^4,q^6\\
q,q^9
\end{matrix};q^{10}\right)_\infty.\label{eq:delta}
\end{align}

\begin{theorem}\label{THM-vanishing-2}
	For any $n\geq0$,
	\begin{align}
	\gamma(5n+4) &=0\label{vanisH(q^{10})-1}
	\intertext{and}
	\delta(5n+4) &=0.\label{vanisH(q^{10})-2}
	\end{align}
\end{theorem}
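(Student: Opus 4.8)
The plan is to regard each product as a rational function in Rogers--Ramanujan quotients multiplied by an Andrews--Bressoud product of modulus $q^{10}$, and then to isolate the residue class $4\bmod 5$ by means of the dissection \eqref{Mcl-iden-new}. Indeed, from \eqref{eq:theta} and \eqref{eq:delta} we have $\gamma=\varkappa(q)^{-1}\left(\begin{matrix}q^2,q^8\\q^3,q^7\end{matrix};q^{10}\right)_\infty$ and $\delta=\varkappa(q)\left(\begin{matrix}q^4,q^6\\q,q^9\end{matrix};q^{10}\right)_\infty$, where $\varkappa(q)=R(q)^2/R(q^2)$.

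First I would apply \eqref{Mcl-iden-new} with $k=5$, $r=2$ to the Andrews--Bressoud factor of $\gamma$, and with $k=5$, $r=4$ to that of $\delta$. In each case the companion factor $\left(\begin{matrix}q^{10},q^{10}\\q^5,q^5\end{matrix};q^{10}\right)_\infty$ is a power series in $q^5$, so dividing it out produces a genuine $5$-dissection $\sum_{j=0}^{4}q^{j(5-r)}\Theta_j(q^5)$ of the Andrews--Bressoud product. The key observation is that the term indexed by $j^\ast=(5+r-1)/2$ (so $j^\ast=3$ for $\gamma$ and $j^\ast=4$ for $\delta$) satisfies $5(5-r+2j^\ast+1)=50=2k^2$, so its numerator carries the factor $(q^{0};q^{50})_\infty$, which vanishes since $1-q^0=0$; hence that term drops out. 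Because $\gcd(5-r,5)=1$, the exponents $j(5-r)$ run through a complete residue system modulo $5$, and $j^\ast(5-r)\equiv4\pmod5$ in both cases. Thus the Andrews--Bressoud factor alone already has no coefficients in the progression $5n+4$; this is the effective form of the Andrews--Bressoud vanishing for these two products.

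Next I would combine this with the $5$-dissections of $\varkappa(q)$ and $\varkappa(q)^{-1}$ obtained in \cite{CT2020}, say $\varkappa(q)^{-1}=\sum_{i=0}^{4}q^i B_i(q^5)$. Multiplying the two dissections and collecting the part in residue $4$, and using that for $\gamma$ the pieces $\Theta_j$ survive only for $j\in\{0,1,2,4\}$ (with residues $0,3,1,2$), the residue-$4$ component reduces to a finite combination of products of infinite products in $q^5$, schematically of the form $B_4\Theta_0+B_1\Theta_1+q^5 B_3\Theta_2+q^{10}B_2\Theta_4$. Since this component is exactly $\sum_n\gamma(5n+4)q^{5n+4}$, the assertion $\gamma(5n+4)=0$ is equivalent to the identical vanishing of this combination as a series in $q^5$; the same reduction applies to $\delta$ with $\varkappa(q)$ in place of $\varkappa(q)^{-1}$, $r=4$, and surviving set $\{0,1,2,3\}$.

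The main obstacle is precisely to prove that this residual combination vanishes identically. After inserting the explicit components $B_i$ and rewriting the dissection pieces $\Theta_j$ as theta quotients---being careful with the negative exponent arising for the $j=4$ piece of $\gamma$, which I would normalise through $(q^{-10};q^{50})_\infty=(1-q^{-10})(q^{40};q^{50})_\infty$---the desired cancellation should collapse to a genuine relation among Rogers--Ramanujan type infinite products, that is, to one of the new identities involving Ramanujan's parameters promised in the abstract. I would establish it by expanding each infinite product through the Jacobi triple product (and, if necessary, the quintuple product identity), thereby reducing the claim to an identity between theta functions that can be verified either by matching finitely many coefficients or by a standard argument with the underlying functional equations.
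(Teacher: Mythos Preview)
Your plan is essentially the paper's own proof: factor $\gamma$ and $\delta$ as $\varkappa(q)^{\mp 1}$ times an Andrews--Bressoud product, $5$-dissect the latter via \eqref{Mcl-iden-new} (obtaining the paper's Lemma, \eqref{eq:AB-5.2} and \eqref{eq:AB-5.4}, which indeed have no residue-$4$ piece because the $j^\ast$ term vanishes), multiply by the known $5$-dissection \eqref{CT-iden-1}/\eqref{CT-iden-2} of $\varkappa(q)^{\pm 1}$, and collect the residue-$4$ part.

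The one point where you overshoot is the final step. You anticipate that the residual four-term combination ``should collapse to a genuine relation among Rogers--Ramanujan type infinite products'' requiring triple/quintuple product expansions or one of the new parameter identities in Section~\ref{sect:thm-cor}. In fact nothing of the sort is needed here: once you write out the four products explicitly, they cancel \emph{pairwise by elementary simplification}. For $\gamma$, the pair coming from $(B_1,\Theta_1)$ and $(B_4,\Theta_0)$ both equal $\pm 2q^4 G(q^5)^2 H(q^5)^3 G(q^{10}) H(q^{10})$ with opposite signs, and the pair from $(B_3,\Theta_2)$ and $(B_2,\Theta_4)$ both equal $\pm 4q^9 G(q^5)^2 H(q^5)^2 H(q^{10})^3 E(q^{50})^2/E(q^{25})^2$ with opposite signs; the same pairwise cancellation occurs for $\delta$. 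The deeper identities of Section~\ref{sect:thm-cor} are used for Theorem~\ref{THM-vanishing} (the $\nu(q^2)^{\pm 1}$ case), not for this theorem. So your proposed machinery would work but is unnecessary; the actual verification is immediate.
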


The rest of this paper is organized as follows. In Sect.~\ref{sect:lemmas}, we collect
some necessary lemmas. Several identities involving Ramanujan's parameters will be discussed in Sect.~\ref{sect:thm-cor}. These identities play an important role in the proof of Theorem
\ref{THM-vanishing}, which will be presented in Sect.~\ref{sect:final-thm}. We then work on the proof of Theorem \ref{THM-vanishing-2} in Sect.~\ref{sect:final-thm-2}. Finally, we conclude in the last section with several remarks.

\section{Preliminaries}\label{sect:lemmas}

Let us collect some necessary lemmas that will be utilized in the sequel.

\begin{lemma}
	We have
	\begin{gather}
	\dfrac{E(q^2)^{10}}{E(q)^4E(q^4)^4}-\dfrac{E(q^{10})^{10}}{E(q^5)^4E(q^{20})^4}
	=\dfrac{4qE(q^2)^2E(q^5)E(q^{20})}{E(q)E(q^4)}\label{Hir-iden-4}\\
	\intertext{and}
	\dfrac{E(q^2)^4}{E(q)^2}-\dfrac{qE(q^{10})^4}{E(q^5)^2}
	=\dfrac{E(q^2)E(q^5)^3}{E(q)E(q^{10})}.\label{Hir-iden-21}
	\end{gather}
\end{lemma}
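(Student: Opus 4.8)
The plan is to prove these as identities between eta-quotients (equivalently, modular functions) by reducing each to a verifiable statement. Both \eqref{Hir-iden-4} and \eqref{Hir-iden-21} express a difference of two products of $E(q^d)$'s as a single product, where every term involves only $q$-powers supported on the divisors of $20$ (namely $1,2,4,5,10,20$) in the first identity and on $1,2,5,10$ in the second. My first move would be to clear denominators and homogenize, rewriting each identity so that it becomes a polynomial relation among theta-type building blocks. For the second identity \eqref{Hir-iden-21}, multiplying through by $E(q)E(q^{10})/E(q^2)$ I would aim to reach a form like
\begin{equation*}
\frac{E(q)E(q^2)^3E(q^{10})}{E(q)^2}-\frac{qE(q)E(q^{10})^5}{E(q^5)^2}=E(q^5)^3,
\end{equation*}
and then recognize the left-hand terms as instances of classical theta functions, so that the identity becomes a known addition or dissection formula.

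The key tool I expect to use is the theory of the two Rogers--Ramanujan-adjacent theta functions together with standard product-to-sum (Jacobi triple product) expansions. Concretely, I would first recall the representations
\begin{equation*}
\frac{E(q^2)^2}{E(q)}=\sum_{n\in\mathbb{Z}}q^{n^2},\qquad \frac{E(q^2)}{E(q)}=\sum_{n\ge 0}q^{n(n+1)/2},
\end{equation*}
so that the combinations appearing in \eqref{Hir-iden-21} can be written as products of classical one-dimensional theta series in the variables $q$ and $q^5$. The identity would then follow from a $2$-dissection or a Gauss-type identity relating $\varphi(q)$, $\psi(q)$ and their dilates $\varphi(q^5)$, $\psi(q^5)$. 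For \eqref{Hir-iden-4}, which has higher powers, I would try the same philosophy but first divide out the obvious common factors: setting $x=E(q^2)^2/(E(q)E(q^4))$ makes the first term $x^{5}\cdot(\text{correction})$, and I would look to express the whole identity as $P(q)-P(q^5\text{-analog})=4q\cdot(\text{linking product})$, which is the signature of a level-lowering/raising relation among Eisenstein-like eta-quotients.

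The main obstacle, as I see it, is \eqref{Hir-iden-4}: the tenth powers and quartic denominators make a direct Jacobi-triple-product expansion unwieldy, and the factor of $4q$ on the right signals that this is genuinely a two-term theta identity (a difference of two ``squares'' of theta functions collapsing to a product) rather than a formal manipulation. My strategy there would be to treat both sides as modular functions on $\Gamma_0(20)$: after checking that the quotient of the two sides has no zeros or poles at any cusp by computing orders of vanishing of each $E(q^d)$ via the standard valence formula, the identity reduces to matching finitely many initial $q$-coefficients, which is a finite and mechanical verification. I would therefore expect the honest work to lie in the cusp-order bookkeeping for \eqref{Hir-iden-4}, while \eqref{Hir-iden-21} should succumb directly to a known Ramanujan theta-function identity (it has the shape of the well-known $\varphi$--$\psi$ dissection at level $5$), and I would cite or re-derive that from the quintuple or Jacobi triple product as the cleaner of the two cases.
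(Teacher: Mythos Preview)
The paper does not actually prove this lemma: it simply cites the two identities as (34.1.20) and (34.1.21) in Hirschhorn's book \emph{The Power of $q$}. Your proposal therefore goes well beyond what the paper does, since you sketch a self-contained argument rather than a citation.

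Your overall strategy (recognise the terms as classical theta functions, or failing that, verify equality as modular functions on $\Gamma_0(20)$ by checking cusp orders and finitely many coefficients) is sound, but the concrete formulas you wrote down are wrong and would derail the computation if used as stated. The claimed representation $\frac{E(q^2)^2}{E(q)}=\sum_{n\in\mathbb{Z}}q^{n^2}$ is incorrect: in fact $\frac{E(q^2)^2}{E(q)}=\psi(q)=\sum_{n\ge 0}q^{n(n+1)/2}$, while $\varphi(q)=\sum_{n\in\mathbb{Z}}q^{n^2}=\dfrac{E(q^2)^5}{E(q)^2E(q^4)^2}$. With the correct dictionary the structure is in fact cleaner than you anticipate: the left-hand side of \eqref{Hir-iden-4} is exactly $\varphi(q)^2-\varphi(q^5)^2$ and the left-hand side of \eqref{Hir-iden-21} is exactly $\psi(q)^2-q\psi(q^5)^2$, so both identities are standard Ramanujan theta relations that follow from product manipulations without any appeal to modularity on $\Gamma_0(20)$. (Your displayed rewriting of \eqref{Hir-iden-21} also drops an $E(q^2)$ from the denominator of the middle term.) The modular-function fallback you describe would certainly work, but once the $\varphi/\psi$ eta-quotients are corrected that machinery is unnecessary.
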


\begin{proof}
	The two identities come from (34.1.20) and (34.1.21) in \cite{Hir2017}, respectively.
\end{proof}

\begin{lemma}
	We have
	\begin{align}
	G(q)^2H(q^2)+G(q^2)H(q)^2 &=\dfrac{2G(q)G(q^2)^2E(q^{10})^2}
	{E(q^5)^2},\label{Hir-iden-1}\\
	G(q)^2H(q^2)-G(q^2)H(q)^2 &=\dfrac{2qH(q)H(q^2)^2E(q^{10})^2}
	{E(q^5)^2},\label{Hir-iden-2}\\
	G(q)G(q^2)^2-qH(q)H(q^2)^2 &=\dfrac{G(q^2)H(q)^2E(q^5)^2}
	{E(q^{10})^2}.\label{Hir-iden-3}
	\end{align}
\end{lemma}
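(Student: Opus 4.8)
The plan is to strip away the elementary relations and reduce everything to a single genuinely modular identity. Grouping $E(q)=(q;q)_\infty$ by residues modulo $5$ gives $(q,q^2,q^3,q^4;q^5)_\infty=E(q)/E(q^5)$, hence the basic evaluations $G(q)H(q)=E(q^5)/E(q)$ and $G(q^2)H(q^2)=E(q^{10})/E(q^2)$. Writing $H(q)=R(q)G(q)$ and $H(q^2)=R(q^2)G(q^2)$, recalling $k(q)=qR(q)R(q^2)^2$ from \eqref{eq:para-k} so that $R(q)R(q^2)^2=k(q)/q$, and dividing each of \eqref{Hir-iden-1}, \eqref{Hir-iden-2}, \eqref{Hir-iden-3} by $G(q)G(q^2)^2$, the three identities become respectively
\begin{align*}
\frac{G(q)}{G(q^2)}\bigl(R(q^2)+R(q)^2\bigr)&=\frac{2E(q^{10})^2}{E(q^5)^2},\\
\frac{G(q)}{G(q^2)}\bigl(R(q^2)-R(q)^2\bigr)&=\frac{2E(q^{10})^2}{E(q^5)^2}\,k(q),\\
\frac{G(q)}{G(q^2)}R(q)^2&=\frac{E(q^{10})^2}{E(q^5)^2}\bigl(1-k(q)\bigr).
\end{align*}
Since $\frac{G(q)}{G(q^2)}R(q^2)=\frac{G(q)H(q^2)}{G(q^2)^2}$ and $\frac{G(q)}{G(q^2)}R(q)^2=\frac{H(q)^2}{G(q)G(q^2)}$, regrouping each factor over residues modulo $10$ (using $(q,q^4;q^5)_\infty=(q,q^4,q^6,q^9;q^{10})_\infty$ and $(q^2,q^3;q^5)_\infty=(q^2,q^3,q^7,q^8;q^{10})_\infty$) and cancelling $E(q^5)=(q^5,q^{10};q^{10})_\infty$ shows that the whole system is equivalent to the pair of evaluations
\begin{align*}
1+k(q)&=\frac{(q^2,q^5,q^8;q^{10})_\infty^2}{(q,q^9;q^{10})_\infty\,(q^4,q^6;q^{10})_\infty^2},\tag{$\star$}\\
1-k(q)&=\frac{(q,q^4,q^6,q^9;q^{10})_\infty\,(q^5;q^{10})_\infty^2}{(q^2,q^8;q^{10})_\infty\,(q^3,q^7;q^{10})_\infty^2};\tag{$\star\star$}
\end{align*}
indeed \eqref{Hir-iden-3} is exactly $(\star\star)$, while \eqref{Hir-iden-1} is their sum.

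Next I would observe that $(\star)$ and $(\star\star)$ are not independent: dividing the right-hand side of $(\star\star)$ by that of $(\star)$ and simplifying the mod-$10$ products yields precisely $R(q)^2/R(q^2)=\varkappa(q)$ of \eqref{eq:para-kappa}, so the two evaluations are interchanged by the classical relation $\varkappa(q)\bigl(1+k(q)\bigr)=1-k(q)$ between Ramanujan's parameters (cf.\ \cite{AB2005}). It therefore suffices to prove the single evaluation $(\star)$.

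To prove $(\star)$ I would pass to Ramanujan's theta function $f(a,b)=\sum_{n=-\infty}^{\infty}a^{n(n+1)/2}b^{n(n-1)/2}$, writing each residue block as $f(-q^j,-q^{10-j})=(q^j,q^{10-j};q^{10})_\infty E(q^{10})$ for $j=1,2,3,4$ and $f(-q^5,-q^5)=(q^5;q^{10})_\infty^2E(q^{10})$. Clearing denominators in $(\star)$ and using $k(q)=q\,f(-q,-q^9)f(-q^2,-q^8)\big/\bigl(f(-q^3,-q^7)f(-q^4,-q^6)\bigr)$, the claim collapses to one polynomial identity among these five theta functions, which I would establish by a suitable specialization of the quintuple product identity---the same engine behind Rogers's forty identities for $G$ and $H$. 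Alternatively, since the total product of the five theta functions equals $E(q)E(q^5)E(q^{10})^3$, multiplying through turns $(\star)$ into an eta-quotient identity that may be matched against the relations of the first lemma, in particular \eqref{Hir-iden-21}.

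The elementary bookkeeping---the mod-$5$ and mod-$10$ regroupings and the passage through $\varkappa(q)$---is routine. The crux is the single level-$10$ theta identity behind $(\star)$. The main obstacle I anticipate is arranging the theta functions so that the correct specialization of the quintuple product identity reproduces the mixed term $q\,f(-q,-q^9)f(-q^2,-q^8)/\bigl(f(-q^3,-q^7)f(-q^4,-q^6)\bigr)$ with the right sign and coefficient; should one instead go through eta-quotients, the delicate point is steering the product cancellations so that the identity lands exactly on \eqref{Hir-iden-21} rather than on some unfamiliar relation.
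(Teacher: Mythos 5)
The paper itself does not prove this lemma: it simply cites (17.4.10), (17.4.11) and (17.4.13) of Hirschhorn's \emph{The Power of $q$}. So any self-contained argument necessarily goes beyond what the paper does, and your preparatory algebra is in fact correct: dividing each identity by $G(q)G(q^2)^2$ and regrouping the products modulo $10$ does show that the three identities are together equivalent to the pair $(\star)$, $(\star\star)$, with \eqref{Hir-iden-3} being $(\star\star)$, \eqref{Hir-iden-1} their sum and \eqref{Hir-iden-2} their difference; and the quotient of the two right-hand sides is indeed $\varkappa(q)=R(q)^2/R(q^2)$.

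Nevertheless there is a genuine gap, located exactly where all the analytic content sits. First, the identity $(\star)$ is never proved: you assert that it ``collapses to one polynomial identity among these five theta functions, which I would establish by a suitable specialization of the quintuple product identity,'' and then concede that you do not know which specialization produces the mixed term with the correct sign and coefficient. That is a plan, not a proof; until $(\star)$ (equivalently, any single one of \eqref{Hir-iden-1}--\eqref{Hir-iden-3}) is actually derived, nothing has been established. Second, the ``classical relation'' $\varkappa(q)\bigl(1+k(q)\bigr)=1-k(q)$ that you use to pass from $(\star)$ to $(\star\star)$ is, after clearing denominators, precisely the statement obtained by dividing \eqref{Hir-iden-2} by \eqref{Hir-iden-1}; in the standard references (Hirschhorn's book, Andrews--Berndt) this relation is \emph{derived from} identities of exactly this type, so invoking it here risks circularity unless you supply a proof of it that does not rest on the present lemma. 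In short, the bookkeeping soundly reduces three identities to one, but that one remaining identity --- which is the lemma --- is left unproven.
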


\begin{proof}
	The identities \eqref{Hir-iden-1}--\eqref{Hir-iden-3} are
	(17.4.10), (17.4.11) and (17.4.13) in \cite{Hir2017}, respectively.
\end{proof}

\begin{lemma}
	We have
	\begin{align}
	\dfrac{R(q)^2}{R(q^2)} &=\dfrac{G(q^5)^2H(q^5)^6}{G(q^{10})H(q^{10})^3}
	-\dfrac{2qG(q^5)^4H(q^5)^3E(q^{50})^2}{G(q^{10})H(q^{10})^2E(q^{25})^2}
	+\dfrac{4q^2G(q^5)^3H(q^5)^4E(q^{50})^2}{G(q^{10})H(q^{10})^2E(q^{25})^2}\notag\\
	&\quad-\dfrac{4q^3G(q^5)^3H(q^5)^3E(q^{50})^4}{H(q^{10})^2E(q^{25})^4}
	+\dfrac{2q^4G(q^5)^3H(q^5)^4E(q^{50})^2}{G(q^{10})^2H(q^{10})E(q^{25})^2} \label{CT-iden-1}\\
	\intertext{and}
	\dfrac{R(q^2)}{R(q)^2} &=\dfrac{G(q^5)^6H(q^5)^2}{G(q^{10})^3H(q^{10})}
	+\dfrac{2qG(q^5)^4H(q^5)^3E(q^{50})^2}{G(q^{10})H(q^{10})^2E(q^{25})^2}
	-\dfrac{4q^7G(q^5)^3H(q^5)^3E(q^{50})^4}{G(q^{10})^2E(q^{25})^4}\notag\\
	&\quad-\dfrac{4q^3G(q^5)^4H(q^5)^3E(q^{50})^2}{G(q^{10})^2H(q^{10})E(q^{25})^2}
	-\dfrac{2q^4G(q^5)^3H(q^5)^4E(q^{50})^2}{G(q^{10})^2H(q^{10})E(q^{25})^2}. \label{CT-iden-2}
	\end{align}
\end{lemma}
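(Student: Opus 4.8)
Both identities are exactly the $5$-dissections of Ramanujan's parameter $\varkappa(q)=R(q)^2/R(q^2)$ and of its reciprocal, which are already recorded in the authors' companion paper \cite{CT2020}; nonetheless, the plan is to re-derive them from the lemmas collected above so that this section is self-contained and so that the appearance of the Rogers--Ramanujan functions on the right-hand sides is explained. The first move is to trade the continued fraction for the Rogers--Ramanujan functions,
$$\varkappa(q)=\frac{R(q)^2}{R(q^2)}=\frac{G(q^2)\,H(q)^2}{G(q)^2\,H(q^2)}.$$
Adding and subtracting \eqref{Hir-iden-1} and \eqref{Hir-iden-2} expresses the two factors $G(q)^2H(q^2)$ and $G(q^2)H(q)^2$ as $\dfrac{E(q^{10})^2}{E(q^5)^2}$ times $G(q)G(q^2)^2\pm qH(q)H(q^2)^2$; the eta-quotient prefactor cancels in the quotient, so that
$$\varkappa(q)=\frac{G(q)G(q^2)^2-qH(q)H(q^2)^2}{G(q)G(q^2)^2+qH(q)H(q^2)^2}=\frac{1-k(q)}{1+k(q)},$$
where $k(q)=qR(q)R(q^2)^2=qH(q)H(q^2)^2/\bigl(G(q)G(q^2)^2\bigr)$ is the parameter defined in \eqref{eq:para-k}; symmetrically $\varkappa(q)^{-1}=(1+k(q))/(1-k(q))$. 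This reduces the whole problem to a single series, $k(q)$.

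The second step is the $5$-dissection proper. I would feed the $5$-dissection of $k(q)$ from \cite{CT2020} (equivalently, combine Hirschhorn's $5$-dissections of $R(q)$ and $R(q^2)$ in \cite{Hir1998}) into $(1-k)/(1+k)$ and $(1+k)/(1-k)$, expand, and sort the outcome according to the residue of the exponent modulo $5$. Each of the five residue classes is, a priori, an eta-quotient in $q^5,q^{10},q^{25},q^{50}$. To convert these into the Rogers--Ramanujan monomials appearing in \eqref{CT-iden-1} and \eqref{CT-iden-2}, I would invoke \eqref{Hir-iden-1}--\eqref{Hir-iden-3} with $q$ replaced by $q^5$; for example \eqref{Hir-iden-1} and \eqref{Hir-iden-3} at $q^5$ read
$$\frac{E(q^{50})^2}{E(q^{25})^2}=\frac{G(q^5)^2H(q^{10})+G(q^{10})H(q^5)^2}{2\,G(q^5)G(q^{10})^2}$$
and
$$\frac{E(q^{25})^2}{E(q^{50})^2}=\frac{G(q^5)G(q^{10})^2-q^5H(q^5)H(q^{10})^2}{G(q^{10})H(q^5)^2},$$
which absorb the surviving eta factors $E(q^{50})^2/E(q^{25})^2$ and $E(q^{25})^2/E(q^{50})^2$ into products of $G(q^5),H(q^5),G(q^{10}),H(q^{10})$; the eta-identities \eqref{Hir-iden-4} and \eqref{Hir-iden-21} at $q^5$ are on hand should a further reduction be required. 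Matching the five pieces against the explicit powers $q^0,\dots,q^4$ then delivers the two claimed expansions.

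The main obstacle is this middle step. Building $(1\mp k)/(1\pm k)$ out of the $5$-dissection of $k$ and re-expanding the quotient as a bona fide $5$-dissection is a lengthy theta-function computation, and one must check that the five residue classes collapse to precisely the monomials displayed: there is no canonical normal form for a product of $G(q^5),H(q^5),G(q^{10}),H(q^{10}),E(q^{25}),E(q^{50})$, so some care is needed to arrive at the exact shape quoted. A cleaner but less elementary alternative sidesteps the dissection entirely: after clearing the powers $q^j$, each side of \eqref{CT-iden-1} and \eqref{CT-iden-2} is, up to a rational power of $q$, a weight-zero modular function on a suitable congruence subgroup, so the equalities would follow from the valence formula once agreement of finitely many initial $q$-coefficients is verified; the price there is identifying the group and bounding the orders of the poles at the cusps.
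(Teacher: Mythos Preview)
The paper's own proof is a one-line citation: the two identities are (1.7) and (1.8) of \cite{CT2020}. Your opening sentence already says this, so at that point you are done and aligned with the paper.

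Where your write-up diverges is in the promised ``self-contained'' re-derivation, and there the central step does not go through as described. You correctly obtain $\varkappa(q)=(1-k(q))/(1+k(q))$ from \eqref{Hir-iden-1}--\eqref{Hir-iden-2}, but then propose to ``feed the $5$-dissection of $k(q)$ \ldots\ into $(1-k)/(1+k)$, expand, and sort the outcome according to the residue of the exponent modulo $5$.'' A $5$-dissection of $k$ gives $5$-dissections of $1\pm k$, but a \emph{quotient} of two $5$-dissected series is not itself $5$-dissected by any formal expansion; there is nothing to ``sort.'' To extract a genuine $5$-dissection of $A/B$ one must first rationalize the denominator---multiply top and bottom by a product of companions of $B$ so that the new denominator lies in $\mathbb{Z}[[q^5]]$---and that rationalizing factor is precisely the non-obvious input you have not supplied. (Your parenthetical alternative, multiplying Hirschhorn's $5$-dissection of $R(q)^{\pm1}$ against $R(q^2)^{\mp1}$, is actually more promising, since $R(q^2)^{\pm1}$ is already a series in $q^2$ and Hirschhorn's dissection of $R$ at $q^2$ lands in $q^{10}$; but you would still have to square the $R(q)$ side and simplify the resulting cross terms, none of which you attempt.) You yourself flag this as ``the main obstacle,'' and indeed it is: without a concrete mechanism for the quotient step the argument is a plan, not a proof. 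The modular-function route you mention at the end would work, but you do not identify the group or the cusp bound, so that too remains a sketch.

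In short: the citation to \cite{CT2020} is the paper's proof and is sufficient; your additional derivation has a real gap at the passage from the dissection of $k$ to that of $(1-k)/(1+k)$.
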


\begin{proof}
	The identities \eqref{CT-iden-1} and \eqref{CT-iden-2}
	are equivalent to (1.7) and (1.8) in \cite{CT2020}.
\end{proof}

\begin{lemma}
	We have
	\begin{gather}
	\dfrac{1}{R(q)R(q^2)^2}-q^2R(q)R(q^2)^2
	=\dfrac{E(q^2)E(q^5)^5}{E(q)E(q^{10})^5}\label{BK-iden-1}\\
	\intertext{and}
	\dfrac{R(q^2)}{R(q)^2}-\dfrac{R(q)^2}{R(q^2)}
	=\dfrac{4qE(q)E(q^{10})^5}{E(q^2)E(q^5)^5}.\label{BK-iden-2}
	\end{gather}
\end{lemma}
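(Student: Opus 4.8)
The plan is to express everything through the Rogers--Ramanujan functions using $R(q)=H(q)/G(q)$ and $R(q^2)=H(q^2)/G(q^2)$, so that each left-hand side becomes a difference of a quantity and (essentially) its reciprocal. Placing that difference over a common denominator produces a numerator that is a difference of two squares, which factors as a product of a ``sum'' and a ``difference'' of Rogers--Ramanujan products; these factors are precisely the shapes controlled by \eqref{Hir-iden-1}--\eqref{Hir-iden-3}. Throughout I will use the elementary evaluation $G(q)H(q)=E(q^5)/E(q)$, which is immediate from the product definitions since $(q,q^2,q^3,q^4;q^5)_\infty=E(q)/E(q^5)$, together with its image under $q\mapsto q^2$.

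For \eqref{BK-iden-2}, substituting $R=H/G$ gives
\[
\frac{R(q^2)}{R(q)^2}-\frac{R(q)^2}{R(q^2)}=\frac{\bigl(G(q)^2H(q^2)\bigr)^2-\bigl(G(q^2)H(q)^2\bigr)^2}{G(q)^2G(q^2)H(q)^2H(q^2)}.
\]
The numerator factors as $\bigl(G(q)^2H(q^2)-G(q^2)H(q)^2\bigr)\bigl(G(q)^2H(q^2)+G(q^2)H(q)^2\bigr)$, and these two factors are exactly the right-hand sides supplied by \eqref{Hir-iden-2} and \eqref{Hir-iden-1}. Multiplying them, cancelling the common $G,H$ factors against the denominator, and finally replacing each surviving $G(q)H(q)$ and $G(q^2)H(q^2)$ by $E(q^5)/E(q)$ and $E(q^{10})/E(q^2)$ collapses the expression to $4qE(q)E(q^{10})^5/\bigl(E(q^2)E(q^5)^5\bigr)$, as required.

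For \eqref{BK-iden-1}, the same maneuver writes the left-hand side as
\[
\frac{\bigl(G(q)G(q^2)^2\bigr)^2-q^2\bigl(H(q)H(q^2)^2\bigr)^2}{G(q)G(q^2)^2H(q)H(q^2)^2},
\]
whose numerator factors as $\bigl(G(q)G(q^2)^2-qH(q)H(q^2)^2\bigr)\bigl(G(q)G(q^2)^2+qH(q)H(q^2)^2\bigr)$. The first factor is given outright by \eqref{Hir-iden-3}. The second factor has no lemma of its own, so the one genuinely new step is to produce it: adding \eqref{Hir-iden-1} and \eqref{Hir-iden-2} eliminates $G(q^2)H(q)^2$ and yields $G(q)G(q^2)^2+qH(q)H(q^2)^2=G(q)^2H(q^2)E(q^5)^2/E(q^{10})^2$. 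With both factors in hand, the product simplifies and, after one more use of $G(q)H(q)=E(q^5)/E(q)$, gives $E(q^2)E(q^5)^5/\bigl(E(q)E(q^{10})^5\bigr)$.

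The computation is essentially routine bookkeeping once the factorizations are set up; the only point requiring a little ingenuity is recognizing that the ``plus'' companion $G(q)G(q^2)^2+qH(q)H(q^2)^2$ needed for \eqref{BK-iden-1} is not among the stated lemmas but is obtained by summing \eqref{Hir-iden-1} and \eqref{Hir-iden-2}. The eta-quotient relations \eqref{Hir-iden-4}, \eqref{Hir-iden-21} and the $5$-dissections \eqref{CT-iden-1}, \eqref{CT-iden-2} are not needed here. I expect the main risk to be sign- and exponent-tracking in the difference-of-squares step rather than any conceptual difficulty.
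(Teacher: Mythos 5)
Your derivation is correct; I checked both computations and they close up exactly as you claim. For \eqref{BK-iden-2} the numerator $\bigl(G(q)^2H(q^2)\bigr)^2-\bigl(G(q^2)H(q)^2\bigr)^2$ becomes, via \eqref{Hir-iden-1} and \eqref{Hir-iden-2}, $4qG(q)H(q)G(q^2)^2H(q^2)^2E(q^{10})^4/E(q^5)^4$, and after dividing by $G(q)^2G(q^2)H(q)^2H(q^2)$ and using $G(q)H(q)=E(q^5)/E(q)$, $G(q^2)H(q^2)=E(q^{10})/E(q^2)$ one lands on $4qE(q)E(q^{10})^5/(E(q^2)E(q^5)^5)$. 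For \eqref{BK-iden-1}, your key observation that summing \eqref{Hir-iden-1} and \eqref{Hir-iden-2} gives $G(q)G(q^2)^2+qH(q)H(q^2)^2=G(q)^2H(q^2)E(q^5)^2/E(q^{10})^2$ is right, and combined with \eqref{Hir-iden-3} the product telescopes to $E(q^2)E(q^5)^5/(E(q)E(q^{10})^5)$. However, your route is genuinely different from the paper's: the paper offers no derivation at all, merely citing identities (1.19) and (1.20) of \cite{BB2018}. What your argument buys is self-containedness --- the lemma now follows from \eqref{Hir-iden-1}--\eqref{Hir-iden-3}, which are already quoted from \cite{Hir2017} in the preliminaries --- and it does so by the same ``square, factor as sum times difference'' mechanism that the paper itself deploys later (e.g.\ in the proofs of Theorems \ref{th:iden-1}--\ref{th:iden-3}), so it fits the paper's toolkit naturally. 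The only cost is that one must supply the ``plus'' companion of \eqref{Hir-iden-3}, which you correctly obtain by adding \eqref{Hir-iden-1} and \eqref{Hir-iden-2}; note also that, unlike the squaring arguments elsewhere in the paper, your factorization is exact and requires no extraction of a square root, so there is no sign ambiguity to resolve.
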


\begin{proof}
	The two identities can be deduced from (1.19) and (1.20) in \cite{BB2018}.
\end{proof}

\begin{lemma}
	We have
	\begin{align}\label{key-iden-1}
	H(q)^4G(q^{2})^2-G(q)^4H(q^{2})^2+G(q)^2H(q)^2G(q^{2})H(q^{2})=
	\frac{G(q^{2})^3H(q^{2})^3}{G(q)^2H(q)^2}.
	\end{align}
\end{lemma}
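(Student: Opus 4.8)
The plan is to strip the Rogers--Ramanujan functions down to the single product $G(q)H(q)=E(q^5)/E(q)$ (whence $G(q^2)H(q^2)=E(q^{10})/E(q^2)$), to reduce \eqref{key-iden-1} to a pure eta-quotient identity, and then to recognise that identity as \eqref{Hir-iden-4} read under $q\mapsto -q$. Throughout I abbreviate $A=G(q)^2H(q^2)$ and $B=G(q^2)H(q)^2$.

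First I would rewrite the left-hand side of \eqref{key-iden-1}. Since $H(q)^4G(q^2)^2=B^2$, $G(q)^4H(q^2)^2=A^2$ and $G(q)^2H(q)^2G(q^2)H(q^2)=AB$, that left-hand side is exactly $B^2-A^2+AB=AB-(A^2-B^2)$. Here $AB=(G(q)H(q))^2G(q^2)H(q^2)=E(q^5)^2E(q^{10})/(E(q)^2E(q^2))$, while the difference factors as $A^2-B^2=(A+B)(A-B)$ and is evaluated by multiplying \eqref{Hir-iden-1} by \eqref{Hir-iden-2}; inserting $G(q)H(q)=E(q^5)/E(q)$ gives $A^2-B^2=4qE(q^{10})^6/(E(q)E(q^2)^2E(q^5)^3)$. (As a consistency check, dividing this by $AB$ recovers \eqref{BK-iden-2}.) The right-hand side of \eqref{key-iden-1} is $(G(q^2)H(q^2))^3/(G(q)H(q))^2=E(q)^2E(q^{10})^3/(E(q^2)^3E(q^5)^2)$.

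Feeding these three evaluations into \eqref{key-iden-1} and clearing denominators, the entire statement collapses to the single relation
\begin{equation*}
\frac{E(q)^4E(q^{10})^2}{E(q^2)^2E(q^5)^4}+\frac{4qE(q)E(q^{10})^5}{E(q^2)E(q^5)^5}=1,
\end{equation*}
equivalently, after multiplying by $E(q^5)^4/E(q^{10})^2$ and rearranging,
\begin{equation*}
\frac{E(q^5)^4}{E(q^{10})^2}-\frac{E(q)^4}{E(q^2)^2}=\frac{4qE(q)E(q^{10})^3}{E(q^2)E(q^5)}.
\end{equation*}

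The last display is where the real content sits, and it is precisely \eqref{Hir-iden-4} with $q$ replaced by $-q$: applying $q\mapsto -q$ to \eqref{Hir-iden-4} and using the standard transformations $E(-q)=E(q^2)^3/(E(q)E(q^4))$ and $E(-q^5)=E(q^{10})^3/(E(q^5)E(q^{20}))$ sends $E(q^2)^{10}/(E(q)^4E(q^4)^4)$ to $E(q)^4/E(q^2)^2$, sends $E(q^{10})^{10}/(E(q^5)^4E(q^{20})^4)$ to $E(q^5)^4/E(q^{10})^2$, and sends the right-hand side to $-4qE(q)E(q^{10})^3/(E(q^2)E(q^5))$, which is exactly what is needed. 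Everything up to this point is mechanical bookkeeping with \eqref{Hir-iden-1}, \eqref{Hir-iden-2} and $G(q)H(q)=E(q^5)/E(q)$; the one step that is not automatic, and hence the main obstacle, is realising that the level-$10$ eta-quotient identity produced by the reduction is the $q\mapsto -q$ mirror of the already-available \eqref{Hir-iden-4}. Once that is seen, no further computation is required.
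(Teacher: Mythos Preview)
Your argument is correct. The reduction to $B^2-A^2+AB$ is accurate, your evaluations of $AB$, $A^2-B^2$ and the right-hand side via $G(q)H(q)=E(q^5)/E(q)$ together with \eqref{Hir-iden-1}--\eqref{Hir-iden-2} are all right, and the final eta-identity
\[
\frac{E(q^5)^4}{E(q^{10})^2}-\frac{E(q)^4}{E(q^2)^2}=\frac{4qE(q)E(q^{10})^3}{E(q^2)E(q^5)}
\]
is indeed \eqref{Hir-iden-4} under $q\mapsto -q$ (and is, incidentally, equivalent to the form $\dfrac{E(q^2)E(q^5)^5}{E(q)E(q^{10})^5}-4q=\dfrac{E(q)^3E(q^5)}{E(q^2)E(q^{10})^3}$ that the paper itself derives later in the proof of Theorem~\ref{th:iden-2}).

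As for comparison: the paper does not prove this lemma at all---it merely cites Theorem~3.2 of \cite{CT2020}. Your route therefore supplies something the present paper lacks, namely a self-contained derivation that uses only identities already recorded here (\eqref{Hir-iden-1}, \eqref{Hir-iden-2}, \eqref{Hir-iden-4}, and the standard $G(q)H(q)=E(q^5)/E(q)$). In that sense your approach is strictly more informative within the scope of this paper, at the modest cost of a short eta-product computation.
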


\begin{proof}
	This identity appears in Theorem 3.2 of \cite{CT2020}.
\end{proof}

\section{Identities involving Ramanujan's parameters}\label{sect:thm-cor}

In this section, we establish several identities involving Ramanujan's parameters $k(q)$, $\mu(q)$ and $\nu(q)$, most of which will also be used to prove Theorem \ref{THM-vanishing}. To the best of our knowledge, these identities appear to be new.

\medskip

First, we notice that \eqref{BK-iden-1} and \eqref{BK-iden-2} can be restated in terms of $k(q)$ and $\varkappa(q)$:
\begin{align*}
\frac{1}{k(q)}-k(q)&=\dfrac{E(q^2)E(q^5)^5}{qE(q)E(q^{10})^5}\\
\intertext{and}
\frac{1}{\varkappa(q)}-\varkappa(q)&=\dfrac{4qE(q)E(q^{10})^5}{E(q^2)E(q^5)^5}.
\end{align*}
We have analogs for $\mu(q)$ and $\nu(q^2)$ also.

\begin{theorem}\label{th:iden-1}
	We have
	\begin{gather}
	\frac{1}{\mu(q)}-\mu(q)=\dfrac{E(q^2)^3E(q^{10})^5}
	{qE(q)E(q^4)E(q^5)^3E(q^{20})^3}\label{eq:mu-iden-1}\\
	\intertext{and}
	\frac{1}{\nu(q^2)}-\nu(q^2)=\dfrac{4qE(q^4)E(q^{20})^3}{E(q^{10})^4}.\label{inter-iden-1}
	\end{gather}
\end{theorem}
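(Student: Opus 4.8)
The plan is to strip both identities down to statements about the Rogers--Ramanujan functions $G$ and $H$, where the expression $\tfrac1X-X$ becomes a difference of squares that factors. Using $R(q)=H(q)/G(q)$, I first write $\mu(q)=qR(q)R(q^4)=qH(q)H(q^4)/\bigl(G(q)G(q^4)\bigr)$, so that
\[
\frac{1}{\mu(q)}-\mu(q)=\frac{G(q)^2G(q^4)^2-q^2H(q)^2H(q^4)^2}{q\,G(q)G(q^4)H(q)H(q^4)}.
\]
The denominator is at once an eta quotient: from the product definitions of $G,H$ and $E(q)=E(q^5)\,(q,q^2,q^3,q^4;q^5)_\infty$ one gets $G(q^j)H(q^j)=E(q^{5j})/E(q^j)$, whence the denominator equals $q\,E(q^5)E(q^{20})/\bigl(E(q)E(q^4)\bigr)$. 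Clearing it, \eqref{eq:mu-iden-1} is seen to be equivalent to the single product identity $G(q)^2G(q^4)^2-q^2H(q)^2H(q^4)^2=E(q^2)^3E(q^{10})^5/\bigl(E(q)^2E(q^4)^2E(q^5)^2E(q^{20})^2\bigr)$.

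I would prove the latter by factoring its left-hand side as $\bigl(G(q)G(q^4)-qH(q)H(q^4)\bigr)\bigl(G(q)G(q^4)+qH(q)H(q^4)\bigr)$ and evaluating the two degree-$4$ combinations $G(q)G(q^4)\pm qH(q)H(q^4)$ separately as eta quotients. These combinations are not among the listed lemmas, so they must be synthesized from the degree-$2$ relations \eqref{Hir-iden-1}--\eqref{Hir-iden-3}: applying those at argument $q$ and again at argument $q^2$ links $G(q^4),H(q^4)$ to $G(q^2),H(q^2)$ and then to $G(q),H(q)$, and eliminating the intermediate $G(q^2),H(q^2)$ should leave exactly such product relations. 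I expect this elimination to be the main obstacle, because \eqref{Hir-iden-1}--\eqref{Hir-iden-3} interchange squares and products and so do not telescope in one step.

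For \eqref{inter-iden-1} I run the same machine. Writing $\nu(q^2)=R(q)^2R(q^2)/R(q^4)=H(q)^2H(q^2)G(q^4)/\bigl(G(q)^2G(q^2)H(q^4)\bigr)$ gives
\[
\frac{1}{\nu(q^2)}-\nu(q^2)=\frac{U^2-V^2}{U\,V},\qquad U=G(q)^2G(q^2)H(q^4),\quad V=H(q)^2H(q^2)G(q^4),
\]
and the same $G(q^j)H(q^j)$ evaluation collapses $UV$ to $E(q^5)^2E(q^{10})E(q^{20})/\bigl(E(q)^2E(q^2)E(q^4)\bigr)$. Again I factor the numerator as $(U-V)(U+V)$; now three scales $q,q^2,q^4$ occur simultaneously, so besides \eqref{Hir-iden-1}--\eqref{Hir-iden-3} I would invoke the key identity \eqref{key-iden-1} (and its image under $q\mapsto q^2$) to resolve $U\pm V$. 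The factor $4q$ on the right of \eqref{inter-iden-1} is telling: among the pure eta-quotient identities only \eqref{Hir-iden-4} carries a factor $4q$, so I anticipate that the final reduction of the assembled eta quotient to the stated right-hand side passes through \eqref{Hir-iden-4}, with \eqref{Hir-iden-21} clearing the remaining lower-weight factors.

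As a consistency check, and a possible alternative route, I note the factorizations $\mu(q)=k(q)/\varkappa(q^2)$ and $\nu(q^2)=\varkappa(q)\varkappa(q^2)$ in terms of the parameters of \eqref{eq:para-k}--\eqref{eq:para-kappa} that are already controlled by the restated forms of \eqref{BK-iden-1}--\eqref{BK-iden-2}. The second factorization in particular explains the $4q$ in \eqref{inter-iden-1} directly, since $\tfrac1\varkappa-\varkappa$ carries such a factor, and it offers an independent way to confirm the output of the $G,H$ computation. In all cases the genuinely hard step is the explicit evaluation of the Rogers--Ramanujan product combinations ($G(q)G(q^4)\pm qH(q)H(q^4)$ for $\mu$, and $U\pm V$ for $\nu$); everything after that is routine manipulation of eta quotients steered by \eqref{Hir-iden-4} and \eqref{Hir-iden-21}.
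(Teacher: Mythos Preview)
Your outline is reasonable and, for the $\mu$-identity, essentially equivalent to the paper's argument; for the $\nu$-identity it diverges and the hard step you flag is not actually resolved.

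For \eqref{eq:mu-iden-1}: the factor $G(q)G(q^4)-qH(q)H(q^4)$ you need is a single known identity, quoted in the paper as \eqref{Hir-iden-4.9} (Hirschhorn (17.4.9)); there is no need to synthesize it by eliminating $G(q^2),H(q^2)$ from \eqref{Hir-iden-1}--\eqref{Hir-iden-3}. The paper in fact starts from \eqref{Hir-iden-4.9}, squares it to get \eqref{CT5-step-2}, squares once more, and then applies \eqref{Hir-iden-4} to obtain $\bigl(\tfrac{1}{\mu}-\mu\bigr)^2$ as the square of an eta quotient. Since $\bigl(GG_4+qHH_4\bigr)^2=\bigl(GG_4-qHH_4\bigr)^2+4q\,GH\,G_4H_4$, this is the same computation as your difference-of-squares factoring, just organized differently; your route would go through once you invoke \eqref{Hir-iden-4.9}.

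For \eqref{inter-iden-1}: the paper does \emph{not} try to evaluate your $U\pm V$ directly. It first proves Theorem~\ref{th:iden-3}, then multiplies \eqref{CT5-step-3} by \eqref{BK-iden-1} and uses an external identity of Baruah--Begum for $\tfrac{1}{R(q^2)^3R(q^4)}+q^4R(q^2)^3R(q^4)$ to obtain $\tfrac{1}{\nu(q^2)}+\nu(q^2)$ explicitly; the usual $(X-\tfrac1X)^2=(X+\tfrac1X)^2-4$ and \eqref{Hir-iden-21} with $q\mapsto q^2$ (not \eqref{Hir-iden-4}, as you guessed) then finish. Your proposed route via \eqref{Hir-iden-1}--\eqref{Hir-iden-3} and \eqref{key-iden-1} is not obviously workable: those identities couple only two adjacent scales at a time, and you give no concrete elimination bridging $q,q^2,q^4$ to produce $U\pm V$. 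The eta-quotient expressions for $U\pm V$ do exist, but establishing them from scratch is at least as hard as the theorem itself, so this is where your proposal remains genuinely incomplete. Your observation $\nu(q^2)=\varkappa(q)\varkappa(q^2)$ is correct but does not shortcut things, since $\tfrac{1}{\varkappa\varkappa_2}-\varkappa\varkappa_2$ is not expressible in $\tfrac{1}{\varkappa}-\varkappa$ and $\tfrac{1}{\varkappa_2}-\varkappa_2$ alone.
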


It is also easy to observe that $\nu(q^2) = \varkappa(q)\varkappa(q^2)$. Hence, \eqref{inter-iden-1} can be rewritten as
$$\frac{1}{\varkappa(q)\varkappa(q^2)}-\varkappa(q)\varkappa(q^2) = \dfrac{4qE(q^4)E(q^{20})^3}{E(q^{10})^4}.$$
For $k(q)$, we have an identity of similar flavor.

\begin{theorem}\label{th:iden-2}
	We have
	\begin{gather}
	\dfrac{k(q)}{k(q^2)}-
	\dfrac{k(q^2)}{k(q)}
	=\dfrac{E(q)E(q^5)^3}{qE(q^{10})^4}.\label{variant-iden-1}
	\end{gather}
\end{theorem}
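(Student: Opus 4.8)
The plan is to turn \eqref{variant-iden-1} into an identity purely among Rogers--Ramanujan functions and eta quotients, and then to verify it through a difference-of-squares factorization. First I would insert the definition \eqref{eq:para-k} to obtain
\[
\frac{k(q)}{k(q^2)}=\frac{qR(q)R(q^2)^2}{q^2R(q^2)R(q^4)^2}=\frac{R(q)R(q^2)}{qR(q^4)^2},
\]
so that the left-hand side of \eqref{variant-iden-1} equals $\dfrac{R(q)^2R(q^2)^2-q^2R(q^4)^4}{qR(q)R(q^2)R(q^4)^2}$. Thus \eqref{variant-iden-1} is equivalent to
\[
R(q)^2R(q^2)^2-q^2R(q^4)^4=\frac{R(q)R(q^2)R(q^4)^2\,E(q)E(q^5)^3}{E(q^{10})^4}.
\]
Writing $R=H/G$ everywhere and clearing denominators by $G(q)^2G(q^2)^2G(q^4)^4$, and using the elementary evaluation $G(q)H(q)=E(q^5)/E(q)$ (immediate from the product definitions of $G$ and $H$) on the right, this reduces to the single target identity
\[
H(q)^2H(q^2)^2G(q^4)^4-q^2G(q)^2G(q^2)^2H(q^4)^4=\frac{E(q^5)^4E(q^{20})^2}{E(q^2)E(q^4)^2E(q^{10})^3}.
\]

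Next I would factor the left-hand side as a difference of squares, $N=(A-B)(A+B)$ with $A=H(q)H(q^2)G(q^4)^2$ and $B=qG(q)G(q^2)H(q^4)^2$, and aim to produce closed product forms for the two factors $A\pm B$. The natural tools are Hirschhorn's identities \eqref{Hir-iden-1}--\eqref{Hir-iden-3}, applied both as stated and under $q\mapsto q^2$ (this is what introduces the level-$4$ functions $G(q^4),H(q^4)$), together with the key identity \eqref{key-iden-1} at $q$ and at $q^2$. The goal is to show that each of $A-B$ and $A+B$ is an eta quotient times a single Rogers--Ramanujan function evaluated at a common argument, so that the two extra Rogers--Ramanujan factors recombine through $G(q^m)H(q^m)=E(q^{5m})/E(q^m)$ into the pure eta quotient on the right; any leftover eta identities should then be discharged using \eqref{Hir-iden-4} and \eqref{Hir-iden-21}.

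The main obstacle is precisely this factor step: $A$ and $B$ mix the three levels $1,2,4$, whereas the right-hand side is supported only on levels $2,4,5,10,20$, so the entire difficulty lies in eliminating the level-$4$ functions $G(q^4)$ and $H(q^4)$ in a controlled fashion. I would also record the two clean reformulations $k(q)/k(q^2)=\nu(q^2)/\mu(q)$ and $\mu(q)\nu(q^2)=\varkappa(q)k(q)$, which connect \eqref{variant-iden-1} to Theorem \ref{th:iden-1}; however, expanding $\nu(q^2)^2-\mu(q)^2$ through these relations returns exactly the displayed target, so they streamline the bookkeeping but do not bypass the core computation.
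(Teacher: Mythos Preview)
Your reduction to the single target
\[
H(q)^2H(q^2)^2G(q^4)^4-q^2G(q)^2G(q^2)^2H(q^4)^4=\frac{E(q^5)^4E(q^{20})^2}{E(q^2)E(q^4)^2E(q^{10})^3}
\]
is correct, and the factorization $(A-B)(A+B)$ is natural; in fact both factors do admit clean product forms,
\[
A-B=\frac{E(q)E(q^5)E(q^{20})}{E(q^2)E(q^4)E(q^{10})},\qquad
A+B=\frac{E(q^5)^3E(q^{20})}{E(q)E(q^4)E(q^{10})^2},
\]
whose product is exactly the right-hand side. The genuine gap is that you have not proved these two evaluations, and you explicitly label this step ``the main obstacle'' without resolving it. The toolset you propose---\eqref{Hir-iden-1}--\eqref{Hir-iden-3} at $q$ and at $q^2$, together with \eqref{key-iden-1}---does not obviously deliver $A\pm B$: each of those identities relates only two consecutive levels, whereas $A\pm B$ mix levels $1$, $2$, and $4$ simultaneously in a pattern none of them matches. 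So as it stands the proposal is a correct reformulation plus an unexecuted plan.

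The paper avoids this obstacle by a different mechanism. Writing $X=R(q)R(q^2)/R(q^4)^2$ and $Y=q^2/X$, it first obtains the \emph{sum} $X+Y$ in closed form: multiply the freshly proved \eqref{CT5-step-3} of Theorem~\ref{th:iden-3} by \eqref{BK-iden-2} with $q\mapsto q^2$, expand, and compare against the known identity \eqref{2q-iden-2} to get
\[
X+Y=\frac{E(q^2)E(q^5)^5}{E(q)E(q^{10})^5}-2q.
\]
Then $(X-Y)^2=(X+Y)^2-4q^2$ is a pure eta expression, which one application of \eqref{Hir-iden-4} (under $q\mapsto -q$) collapses to $\bigl(E(q)E(q^5)^3/E(q^{10})^4\bigr)^2$, and equating constant terms finishes. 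The missing ingredient in your outline is precisely \eqref{CT5-step-3}; your observation $k(q)/k(q^2)=\nu(q^2)/\mu(q)$ is correct but, as you yourself note, does not supply it. If you want to push your factorization through, the quickest route to the two displayed evaluations of $A\pm B$ is in fact to derive $X+Y$ as the paper does and then read off $(A\pm B)^2=(AB/q)(X+Y\pm 2q)$.
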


Finally, we obtain two more interesting identities related to the Rogers--Ramanujan continued fraction.

\begin{theorem}\label{th:iden-3}
	We have
	\begin{align}\label{CT5-step-3}
	\dfrac{R(q)}{R(q^2)R(q^4)}-\dfrac{q^2R(q^2)R(q^4)}{R(q)} =
	\dfrac{E(q)E(q^4)E(q^{10})^{10}}{E(q^2)^2E(q^5)^5E(q^{20})^5}.
	\end{align}
	Moreover,
	\begin{align}
	\left(\dfrac{1}{R(q)R(q^4)}+q^2R(q)R(q^4)\right)-\left(\dfrac{R(q)}{R(q^2)R(q^4)}
	-\dfrac{q^2R(q^2)R(q^4)}{R(q)}\right)=2q.\label{CT-iden-5}
	\end{align}
\end{theorem}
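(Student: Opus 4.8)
The plan is to establish \eqref{CT5-step-3} first and then deduce \eqref{CT-iden-5} from it. Throughout, abbreviate $\mu=\mu(q)=qR(q)R(q^4)$ and write $Q$ for the right-hand side of \eqref{CT5-step-3}. The common starting move for both identities is to clear the continued fractions via $R(q^j)=H(q^j)/G(q^j)$. For \eqref{CT5-step-3}, combining the left-hand side over the denominator $R(q)R(q^2)R(q^4)$ turns it into
\[
\frac{R(q)^2-q^2R(q^2)^2R(q^4)^2}{R(q)R(q^2)R(q^4)}=\frac{H(q)^2G(q^2)^2G(q^4)^2-q^2G(q)^2H(q^2)^2H(q^4)^2}{G(q)H(q)G(q^2)H(q^2)G(q^4)H(q^4)},
\]
so that \eqref{CT5-step-3} becomes the product identity $H(q)^2G(q^2)^2G(q^4)^2-q^2G(q)^2H(q^2)^2H(q^4)^2=Q\,G(q)H(q)G(q^2)H(q^2)G(q^4)H(q^4)$.

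To attack this numerator identity I would first eliminate the $q^4$-level factors. Applying \eqref{Hir-iden-3} with $q\mapsto q^2$ gives $G(q^2)G(q^4)^2-q^2H(q^2)H(q^4)^2=G(q^4)H(q^2)^2E(q^{10})^2/E(q^{20})^2$; substituting this into $G(q^2)^2G(q^4)^2=G(q^2)\cdot G(q^2)G(q^4)^2$ and then using \eqref{Hir-iden-2} to rewrite the resulting combination $H(q)^2G(q^2)-G(q)^2H(q^2)$ collapses the numerator to
\[
E(q^{10})^2H(q)H(q^2)^2\left(\frac{H(q)G(q^2)G(q^4)}{E(q^{20})^2}-\frac{2q^3H(q^2)H(q^4)^2}{E(q^5)^2}\right).
\]
An equivalent route would be to factor the numerator as a product of $H(q)G(q^2)G(q^4)-qG(q)H(q^2)H(q^4)$ and $H(q)G(q^2)G(q^4)+qG(q)H(q^2)H(q^4)$ and to evaluate each of these three-argument theta factors as an eta-quotient, for which \eqref{key-iden-1} (taken at $q$ and at $q^2$) together with \eqref{Hir-iden-1}--\eqref{Hir-iden-3} should suffice. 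Either way, after matching the surviving $G,H$ factors against $Q$ the identity reduces to a finite comparison among the products recorded in the lemmas of Section~\ref{sect:lemmas}.

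Once \eqref{CT5-step-3} is available, \eqref{CT-iden-5} follows cleanly. Using $\mu=qR(q)R(q^4)$ one rewrites the first bracket of \eqref{CT-iden-5} as $q(\mu+\mu^{-1})$ and recognizes the second bracket as the left-hand side of \eqref{CT5-step-3}, hence equal to $Q$; thus \eqref{CT-iden-5} is equivalent to $\mu+\mu^{-1}=Q/q+2$. To prove this I would square and use $(\mu+\mu^{-1})^2=(\mu^{-1}-\mu)^2+4$ together with the value of $\mu^{-1}-\mu$ supplied by \eqref{eq:mu-iden-1}. Writing $P$ for the right-hand side of \eqref{eq:mu-iden-1}, a direct product computation gives $q^2P^2/Q^2=E(q^2)^{10}E(q^5)^4E(q^{20})^4/(E(q)^4E(q^4)^4E(q^{10})^{10})$, and inserting \eqref{Hir-iden-4} for the factor $E(q^2)^{10}/(E(q)^4E(q^4)^4)$ turns the right-hand side into $1+4q/Q$. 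Hence $q^2P^2=Q^2+4qQ$, so $(\mu+\mu^{-1})^2=P^2+4=(Q+2q)^2/q^2$; taking the positive root (legitimate since $\mu\to0^+$ and $Q+2q>0$ as $q\to0^+$) yields $\mu+\mu^{-1}=(Q+2q)/q$, and \eqref{CT-iden-5} drops out at once.

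The main obstacle is confined to \eqref{CT5-step-3}: reducing a genuinely three-argument ($q,q^2,q^4$) product identity by means of only two-argument dissection lemmas demands a careful iterated substitution in which the intermediate expressions stay bulky, and the order of substitutions must be chosen so that the $q^4$-level factors cancel before the bookkeeping becomes unmanageable. By contrast, the passage from \eqref{CT5-step-3} to \eqref{CT-iden-5} is short, the only insight being the single eta-quotient manipulation powered by \eqref{Hir-iden-4}.
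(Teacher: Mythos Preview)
Your overall plan---prove \eqref{CT5-step-3} first, then deduce \eqref{CT-iden-5}---is the reverse of what the paper does, and the gaps are concentrated exactly where you anticipated.

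\textbf{How the paper proceeds.} The paper's key entry point is an identity you do not invoke at all, namely \cite[Eq.~(17.4.9)]{Hir2017}:
\[
G(q)G(q^4)-qH(q)H(q^4)=\frac{E(q^{10})^5}{E(q^2)E(q^5)^2E(q^{20})^2}.
\]
Squaring and dividing by $G(q)H(q)G(q^4)H(q^4)$ immediately yields
\begin{equation}\tag{$\ast$}
\frac{1}{R(q)R(q^4)}+q^2R(q)R(q^4)=2q+Q,
\end{equation}
which is the identity labeled (CT5-step-2) in the proof. Then the product of \eqref{BK-iden-1} and \eqref{BK-iden-2} factors as a difference of two squares, one of which is precisely $\bigl(R(q)/(R(q^2)R(q^4))-q^2R(q^2)R(q^4)/R(q)\bigr)^2$; substituting $(\ast)$ collapses the other square to $Q^2$, giving \eqref{CT5-step-3} after taking the positive root. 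Finally \eqref{CT-iden-5} is literally $(\ast)$ minus \eqref{CT5-step-3}. No $G,H$ bookkeeping is required.

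\textbf{Where your route stalls.} Your reduction of the numerator in \eqref{CT5-step-3} to
\[
E(q^{10})^2H(q)H(q^2)^2\Bigl(\tfrac{H(q)G(q^2)G(q^4)}{E(q^{20})^2}-\tfrac{2q^3H(q^2)H(q^4)^2}{E(q^5)^2}\Bigr)
\]
is correct, but the remaining bracket is still a genuine three-level ($q,q^2,q^4$) combination that is not handled by any of the two-level lemmas \eqref{Hir-iden-1}--\eqref{Hir-iden-3} or \eqref{key-iden-1}; your claim that it ``reduces to a finite comparison among the products recorded in the lemmas of Section~\ref{sect:lemmas}'' is the unproved step. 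The alternative factorization $H(q)G(q^2)G(q^4)\pm qG(q)H(q^2)H(q^4)$ runs into the same problem: neither factor is evaluated by the listed lemmas. The missing ingredient is exactly the three-level input (17.4.9) the paper uses.

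\textbf{On \eqref{CT-iden-5}.} Your squaring argument via $(\mu+\mu^{-1})^2=(\mu^{-1}-\mu)^2+4$ together with \eqref{Hir-iden-4} is arithmetically correct, and indeed it reproduces $(\ast)$. But you invoke \eqref{eq:mu-iden-1}, which is Theorem~\ref{th:iden-1} and in the paper is proved \emph{after} Theorem~\ref{th:iden-3}, using $(\ast)$ itself. So relative to the paper's logical order your deduction of \eqref{CT-iden-5} is circular; what you are really doing is recovering $(\ast)$ from a result whose proof already contains $(\ast)$. Once $(\ast)$ is on the table, \eqref{CT-iden-5} is a one-line subtraction and there is no need to pass through $\mu^{\pm1}$ or \eqref{eq:mu-iden-1} at all.
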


\medskip

The proofs will be organized as follows. First, Theorem \ref{th:iden-3} will be established. We then prove Theorem \ref{th:iden-1}. Finally, the proof of Theorem \ref{th:iden-2} will be provided.

\begin{proof}[Proof of Theorem \ref{th:iden-3}]
	From \cite[Eq.~(17.4.9)]{Hir2017}, we have
	\begin{align}
	G(q)G(q^4)-qH(q)H(q^4) &=\dfrac{E(q^{10})^5}
	{E(q^2)E(q^5)^2E(q^{20})^2}.\label{Hir-iden-4.9}
	\end{align}
	Squaring both sides gives
	\begin{align*}
	G(q)^2G(q^4)^2+q^2H(q)^2H(q^4)^2 &=2qG(q)H(q)G(q^4)H(q^4)+\dfrac{E(q^{10})^{10}}{E(q^2)^2E(q^5)^4E(q^{20})^4}.
	\end{align*}
	We then divide both sides by $G(q)H(q)G(q^4)H(q^4)$. Hence,
	\begin{align}
	\dfrac{1}{R(q)R(q^4)}+q^2R(q)R(q^4)=2q+
	\dfrac{E(q)E(q^4)E(q^{10})^{10}}{E(q^2)^2E(q^5)^5E(q^{20})^5}.\label{CT5-step-2}
	\end{align}
	
	On the other hand, it follows from \eqref{BK-iden-1} and \eqref{BK-iden-2} that
	\begin{align*}
	&\dfrac{4qE(q)E(q^4)E(q^{10})^{10}}{E(q^2)^2E(q^5)^5E(q^{20})^5}
	=\dfrac{4qE(q)E(q^{10})^5}{E(q^2)E(q^5)^5}\cdot
	\dfrac{E(q^4)E(q^{10})^5}{E(q^2)E(q^{20})^5}\\
	&\quad=\left(\dfrac{R(q^2)}{R(q)^2}-\dfrac{R(q)^2}{R(q^2)}\right)\cdot
	\left(\dfrac{1}{R(q^2)R(q^4)^2}-q^4R(q^2)R(q^4)^2\right)\\
	&\quad=\left(\dfrac{1}{R(q)^2R(q^4)^2}+q^4R(q)^2R(q^4)^2\right)
	-\left(\dfrac{R(q)^2}{R(q^2)^2R(q^4)^2}
	+\dfrac{q^4R(q^2)^2R(q^4)^2}{R(q)^2}\right)\\
	&\quad=\left(\dfrac{1}{R(q)R(q^4)}+q^2R(q)R(q^4)\right)^2
	-\left(\dfrac{R(q)}{R(q^2)R(q^4)}-\dfrac{q^2R(q^2)R(q^4)}{R(q)}\right)^2
	-4q^2.
	\end{align*}
	Substituting \eqref{CT5-step-2} into this identity gives
	\begin{align*}
	\dfrac{4qE(q)E(q^4)E(q^{10})^{10}}{E(q^2)^2E(q^5)^5E(q^{20})^5}
	&=\left(\dfrac{E(q)E(q^4)E(q^{10})^{10}}{E(q^2)^2E(q^5)^5E(q^{20})^5}\right)^2
	+\dfrac{4qE(q)E(q^4)E(q^{10})^{10}}{E(q^2)^2E(q^5)^5E(q^{20})^5}\\
	&\quad-\left(\dfrac{R(q)}{R(q^2)R(q^4)}-\dfrac{q^2R(q^2)R(q^4)}{R(q)}\right)^2,
	\end{align*}
	namely,
	\begin{align*}
	\left(\dfrac{R(q)}{R(q^2)R(q^4)}-\dfrac{q^2R(q^2)R(q^4)}{R(q)}\right)^2
	=\left(\dfrac{E(q)E(q^4)E(q^{10})^{10}}{E(q^2)^2E(q^5)^5E(q^{20})^5}\right)^2.
	\end{align*}
	We therefore obtain \eqref{CT5-step-3} by equating the constant term. Further, \eqref{CT-iden-5} follows from \eqref{CT5-step-3} and \eqref{CT5-step-2}.
\end{proof}

\begin{proof}[Proof of Theorem \ref{th:iden-1}]
	We first restate \eqref{eq:mu-iden-1} and \eqref{inter-iden-1} as
	\begin{gather}
	\dfrac{1}{R(q)R(q^4)}-q^2R(q)R(q^4)=\dfrac{E(q^2)^3E(q^{10})^5}
	{E(q)E(q^4)E(q^5)^3E(q^{20})^3}\label{eq:mu-iden}\\
	\intertext{and}
	\dfrac{R(q^4)}{R(q)^2 R(q^2)}
	-\dfrac{R(q)^2 R(q^2)}{R(q^4)}=\dfrac{4qE(q^4)E(q^{20})^3}{E(q^{10})^4}.\label{inter-iden}
	\end{gather}
	
	Let us square both sides of \eqref{CT5-step-2} and subtract $4q^2$. Then
	\begin{align*}
	\left(\dfrac{1}{R(q)R(q^4)}-q^2R(q)R(q^4)\right)^2&=\dfrac{4qE(q)E(q^4)E(q^{10})^{10}}{E(q^2)^2E(q^5)^5E(q^{20})^5}\\
	&\quad+
	\left(\dfrac{E(q)E(q^4)E(q^{10})^{10}}{E(q^2)^2E(q^5)^5E(q^{20})^5}\right)^2.
	\end{align*}
	It follows from \eqref{Hir-iden-4} that
	\begin{align*}
	&\dfrac{4qE(q)E(q^4)E(q^{10})^{10}}{E(q^2)^2E(q^5)^5E(q^{20})^5}=\frac{E(q)^2 E(q^4)^2 E(q^{10})^{10}}{E(q^2)^4 E(q^5)^6 E(q^{20})^6}\cdot \dfrac{4qE(q^2)^2E(q^5)E(q^{20})}{E(q)E(q^4)}\\
	&\quad=\frac{E(q)^2 E(q^4)^2 E(q^{10})^{10}}{E(q^2)^4 E(q^5)^6 E(q^{20})^6}\left(\dfrac{E(q^2)^{10}}{E(q)^4E(q^4)^4}-\dfrac{E(q^{10})^{10}}{E(q^5)^4E(q^{20})^4}\right)\\
	&\quad=\left(\dfrac{E(q^2)^3E(q^{10})^5}
	{E(q)E(q^4)E(q^5)^3E(q^{20})^3}\right)^2
	-\left(\dfrac{E(q)E(q^4)E(q^{10})^{10}}{E(q^2)^2E(q^5)^5E(q^{20})^5}\right)^2.
	\end{align*}
	Hence,
	\begin{align*}
	\left(\dfrac{1}{R(q)R(q^4)}-q^2R(q)R(q^4)\right)^2=\left(\dfrac{E(q^2)^3E(q^{10})^5}
	{E(q)E(q^4)E(q^5)^3E(q^{20})^3}\right)^2.
	\end{align*}
	We therefore obtain \eqref{eq:mu-iden}.
	
	\medskip
	
	Now we turn to prove \eqref{inter-iden}.

    Multiplying \eqref{BK-iden-1} by \eqref{CT5-step-3} gives
	\begin{align*}
	&\dfrac{E(q^4)E(q^{10})^5}{E(q^2)E(q^{20})^5}
	=\dfrac{E(q)E(q^4)E(q^{10})^{10}}{E(q^2)^2E(q^5)^5E(q^{20})^5}
	\cdot\dfrac{E(q^2)E(q^5)^5}{E(q)E(q^{10})^5}\notag\\
	&\quad=\left(\dfrac{R(q)}{R(q^2)R(q^4)}-\dfrac{q^2R(q^2)R(q^4)}{R(q)}\right)\cdot
	\left(\dfrac{1}{R(q)R(q^2)^2}-q^2R(q)R(q^2)^2\right)\notag\\
	&\quad=\left(\dfrac{1}{R(q^2)^3R(q^4)}+q^4R(q^2)^3R(q^4)\right)
	-q^2\left(\dfrac{R(q^4)}{R(q)^2R(q^2)}+\dfrac{R(q)^2R(q^2)}{R(q^4)}\right).
	\end{align*}
	From \cite[Eq.~(1.22)]{BB2018} (see also \cite[Theorem 1.1]{CT2019}),
	\begin{align*}
	\dfrac{1}{R(q^2)^3R(q^4)}+q^4R(q^2)^3R(q^4)=
	\dfrac{4q^4E(q^2)E(q^{20})^5}{E(q^4)E(q^{10})^5}
	+\dfrac{E(q^4)E(q^{10})^5}{E(q^2)E(q^{20})^5}+2q^2.
	\end{align*}
	Hence,
	\begin{align*}
	\dfrac{R(q^4)}{R(q)^2R(q^2)}+\dfrac{R(q)^2R(q^2)}{R(q^4)}
	=\dfrac{4q^2E(q^2)E(q^{20})^5}{E(q^4)E(q^{10})^5}+2,
	\end{align*}
	from which we obtain
	\begin{align*}
	\left(\dfrac{R(q^4)}{R(q)^2R(q^2)}-\dfrac{R(q)^2R(q^2)}{R(q^4)}\right)^2 &=
	\left(\dfrac{R(q^4)}{R(q)^2R(q^2)}+\dfrac{R(q)^2R(q^2)}{R(q^4)}\right)^2-4\\
	&=\dfrac{16q^2E(q^2)E(q^{20})^5}{E(q^4)E(q^{10})^5}
	+\dfrac{16q^4E(q^2)^2E(q^{20})^{10}}{E(q^4)^2E(q^{10})^{10}}.
	\end{align*}
	We further rewrite \eqref{Hir-iden-21} as
	$$1+\dfrac{q^2E(q^2)E(q^{20})^5}{E(q^4)E(q^{10})^5}=
	\dfrac{E(q^4)^3E(q^{20})}{E(q^2)E(q^{10})^3}.$$
	Hence,
	\begin{align*}
	\left(\dfrac{R(q^4)}{R(q)^2R(q^2)}-\dfrac{R(q)^2R(q^2)}{R(q^4)}\right)^2 &=
	\dfrac{16q^2E(q^2)E(q^{20})^5}{E(q^4)E(q^{10})^5}
	\left(1+\dfrac{q^2E(q^2)E(q^{20})^5}{E(q^4)E(q^{10})^5}\right)\\
	&=\dfrac{16q^2E(q^2)E(q^{20})^5}{E(q^4)E(q^{10})^5}\cdot
	\dfrac{E(q^4)^3E(q^{20})}{E(q^2)E(q^{10})^3}\\
	&=\left(\dfrac{4qE(q^4)E(q^{20})^3}{E(q^{10})^4}\right)^2,
	\end{align*}
	from which we obtain \eqref{inter-iden}.
\end{proof}

\begin{proof}[Proof of Theorem \ref{th:iden-2}]
	The proof of \eqref{variant-iden-1} is similar to that of \eqref{inter-iden-1}. We first restate it as
	\begin{equation}\label{variant-iden}
	\dfrac{R(q)R(q^2)}{R(q^4)^2}-
	\dfrac{q^2R(q^4)^2}{R(q)R(q^2)}
	=\dfrac{E(q)E(q^5)^3}{E(q^{10})^4}.
	\end{equation}
	According to \eqref{BK-iden-2} and \eqref{CT5-step-3}, one has
	\begin{align*}
	&\dfrac{4q^2E(q)E(q^{10})^5}{E(q^2)E(q^5)^5}
	=\dfrac{4q^2E(q^2)E(q^{20})^5}{E(q^4)E(q^{10})^5}\cdot
	\dfrac{E(q)E(q^4)E(q^{10})^{10}}{E(q^2)^2E(q^5)^5E(q^{20})^5}\notag\\
	&\quad=\left(\dfrac{R(q)}{R(q^2)R(q^4)}-\dfrac{q^2R(q^2)R(q^4)}{R(q)}\right)\cdot
	\left(\dfrac{R(q^4)}{R(q^2)^2}-\dfrac{R(q^2)^2}{R(q^4)}\right)\notag\\
	&\quad=\left(\dfrac{R(q)}{R(q^2)^3}+\dfrac{q^2R(q^2)^3}{R(q)}\right)
	-\left(\dfrac{R(q)R(q^2)}{R(q^4)^2}+
	\dfrac{q^2R(q^4)^2}{R(q)R(q^2)}\right).
	\end{align*}
	From \cite[Eq.~(1.21)]{BB2018} (see also \cite[Theorem 1.1]{CT2019}),
	\begin{align}
	\dfrac{R(q)}{R(q^2)^3}+\dfrac{q^2R(q^2)^3}{R(q)}
	=\dfrac{E(q^2)E(q^5)^5}{E(q)E(q^{10})^5}
	+\dfrac{4q^2E(q)E(q^{10})^5}{E(q^2)E(q^5)^5}-2q.\label{2q-iden-2}
	\end{align}
	Hence,
	\begin{align*}
	\dfrac{R(q)R(q^2)}{R(q^4)^2}+\dfrac{q^2R(q^4)^2}{R(q)R(q^2)}
	=\dfrac{E(q^2)E(q^5)^5}{E(q)E(q^{10})^5}-2q,
	\end{align*}
	from which we obtain
	\begin{align*}
	\left(\dfrac{R(q)R(q^2)}{R(q^4)^2}-\dfrac{q^2R(q^4)^2}{R(q)R(q^2)}\right)^2
	&=\left(\dfrac{R(q)R(q^2)}{R(q^4)^2}+\dfrac{q^2R(q^4)^2}{R(q)R(q^2)}\right)^2-4q^2\\
	&=\dfrac{E(q^2)^2E(q^5)^{10}}{E(q)^2E(q^{10})^{10}}
	-4q\dfrac{E(q^2)E(q^5)^5}{E(q)E(q^{10})^5}.
	\end{align*}
	Replacing $q$ by $-q$ in \eqref{Hir-iden-4}, we find that
	$$\dfrac{E(q^2)E(q^5)^5}{E(q)E(q^{10})^5}-4q=\dfrac{E(q)^3E(q^5)}{E(q^2)E(q^{10})^3}.$$
	Hence,
	\begin{align*}
	\left(\dfrac{R(q)R(q^2)}{R(q^4)^2}-\dfrac{q^2R(q^4)^2}{R(q)R(q^2)}\right)^2
	&=\dfrac{E(q^2)E(q^5)^5}{E(q)E(q^{10})^5}
	\left(\dfrac{E(q^2)E(q^5)^5}{E(q)E(q^{10})^5}-4q\right)\\
	&=\dfrac{E(q^2)E(q^5)^5}{E(q)E(q^{10})^5}\cdot
	\dfrac{E(q)^3E(q^5)}{E(q^2)E(q^{10})^3}\\
	&=\left(\dfrac{E(q)E(q^5)^3}{E(q^{10})^4}\right)^2,
	\end{align*}
	from which \eqref{variant-iden} follows.
\end{proof}

\section{Proof of Theorem \ref{THM-vanishing}}\label{sect:final-thm}

We first require a corollary of \eqref{inter-iden-1}.

\begin{corollary}\label{cor-inter-iden}
	For any $n\geq0$,
	\begin{align}
	\alpha(5n+2) &=\beta(5n+2)\label{int-iden-1}\\
	\intertext{and}
	\alpha(5n+3) &=\beta(5n+3).\label{int-iden-2}
	\end{align}
\end{corollary}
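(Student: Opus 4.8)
The plan is to read the corollary straight off the identity \eqref{inter-iden-1}. Subtracting the generating functions \eqref{def-alpha} and \eqref{def-beta} gives
\begin{equation*}
\sum_{n=0}^\infty \bigl(\beta(n)-\alpha(n)\bigr)q^n = \frac{1}{\nu(q^2)}-\nu(q^2) = \frac{4qE(q^4)E(q^{20})^3}{E(q^{10})^4}.
\end{equation*}
Since \eqref{int-iden-1} and \eqref{int-iden-2} assert precisely that $\beta(n)-\alpha(n)=0$ whenever $n\equiv 2$ or $n\equiv 3\pmod 5$, it suffices to show that the infinite product on the right carries no term $q^n$ with $n\equiv 2,3\pmod 5$.

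To see this, I would split the right-hand side according to the exponents its factors can contribute modulo $5$. The factor $E(q^{20})^3/E(q^{10})^4$ is a power series in $q^{10}$, so every exponent it contributes is $\equiv 0\pmod 5$. It therefore remains only to control the residues coming from $4qE(q^4)$. Here I would invoke Euler's pentagonal number theorem in the form
\begin{equation*}
E(q^4)=\sum_{n=-\infty}^\infty (-1)^n q^{2n(3n-1)},
\end{equation*}
so that $4qE(q^4)$ is supported on the exponents $2n(3n-1)+1$. Reducing the quadratic modulo $5$ gives $2n(3n-1)+1\equiv n^2+3n+1\pmod 5$, and running $n$ through the residues $0,1,2,3,4$ shows this expression takes only the values $0,1,4$ and never $2$ or $3$.

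Combining the two pieces, each exponent occurring in $\frac{4qE(q^4)E(q^{20})^3}{E(q^{10})^4}$ is the sum of a residue in $\{0,1,4\}$ and a residue $0$, hence lies in $\{0,1,4\}\pmod 5$; crucially this containment of the support holds irrespective of any cancellation among coefficients. In particular no exponent is $\equiv 2$ or $3\pmod 5$, so the relevant coefficients vanish and \eqref{int-iden-1} and \eqref{int-iden-2} follow immediately by comparing coefficients. The only real content is the residue computation for the pentagonal exponents, which I expect to be the \emph{crux}; it is a short finite check once \eqref{inter-iden-1} and the pentagonal number theorem are in hand, but it is also the one spot where an arithmetic or sign slip could silently break the argument, so I would double-check the residue table $n^2+3n+1\pmod 5$ explicitly.
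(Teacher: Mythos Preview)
Your proof is correct and follows essentially the same route as the paper's: both start from \eqref{inter-iden-1}, observe that $E(q^{20})^3/E(q^{10})^4$ is a series in $q^{10}$, and then use Euler's pentagonal number theorem to check that the exponents appearing in $qE(q^4)$ miss the residues $2$ and $3$ modulo $5$. Your version is merely more explicit about the residue computation than the paper's one-line appeal to the pentagonal number theorem.
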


\begin{proof}
	Notice that \eqref{inter-iden-1} can be restated as
	$$\sum_{n=0}^\infty\beta(n)q^n-\sum_{n=0}^\infty\alpha(n)q^n=\dfrac{4qE(q^4)E(q^{20})^3}{E(q^{10})^4}.$$
	Euler's Pentagonal Numbers Theorem \cite[Eq.~(1.6.1)]{Hir2017} tells us that there are no terms of the form $q^{5n+3}$ and $q^{5n+4}$ in the expansion of $E(q)=(q;q)_\infty$. Hence, there are no terms of the form $q^{5n+2}$ and $q^{5n+3}$ in the expansion of $qE(q^4)$. Our desired results therefore hold.
\end{proof}

The relations \eqref{int-iden-1} and \eqref{int-iden-2} clearly imply that if we can prove one of \eqref{vanish-1} and \eqref{vanish-2}, the other holds automatically. Here we will show \eqref{vanish-1}. Our proof relies on the following result.

\begin{theorem}
	We have
	\begin{align}
	\sum_{n=0}^\infty\alpha(5n+2)q^n&=\dfrac{2G(q^2)^4H(q^2)^3E(q^{20})^2}
	{G(q^4)H(q^4)^2E(q^{10})^2}\label{A2}\\
	\intertext{and}
	\sum_{n=0}^\infty\alpha(5n+3)q^n&=-\dfrac{2qG(q^2)^3H(q^2)^4E(q^{20})^2}{G(q^4)^2H(q^4)E(q^{10})^2}.\label{A3}
	\end{align}
\end{theorem}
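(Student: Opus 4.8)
The plan is to start from the factorisation $\nu(q^2)=\varkappa(q)\,\varkappa(q^2)$ and feed in the explicit $5$-dissection of $\varkappa(q)=R(q)^2/R(q^2)$ supplied by \eqref{CT-iden-1}. Writing that dissection as $\varkappa(q)=\sum_{j=0}^{4}q^{j}\varphi_j(q^5)$, where, as functions of a single variable $x$, each coefficient $\varphi_j$ is the quotient of $G,H$ at $x,x^2$ and $E$ at $x^5,x^{10}$ read off from \eqref{CT-iden-1}, I would substitute $q\mapsto q^2$ to obtain $\varkappa(q^2)=\sum_{k=0}^{4}q^{2k}\varphi_k(q^{10})$ and multiply:
\begin{equation*}
\nu(q^2)=\sum_{j=0}^{4}\sum_{k=0}^{4}q^{\,j+2k}\,\varphi_j(q^5)\,\varphi_k(q^{10}).
\end{equation*}
Since every product $\varphi_j(q^5)\varphi_k(q^{10})$ is a power series in $q^5$, the residue of $j+2k$ modulo $5$ decides the arithmetic progression to which a term contributes.

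Next I would isolate the two progressions of interest. For the class $2\pmod 5$ the admissible index pairs are $(j,k)\in\{(2,0),(0,1),(3,2),(1,3),(4,4)\}$, and for the class $3\pmod 5$ they are $(j,k)\in\{(3,0),(1,1),(4,2),(2,3),(0,4)\}$. Collecting these terms and rescaling $q^5\mapsto q$ turns \eqref{A2} and \eqref{A3} into two finite identities among infinite products. A useful orienting observation is that the claimed right-hand sides are exactly $-\varphi_1(q^2)$ and $-q\,\varphi_4(q^2)$, i.e.\ single dissection coefficients of $\varkappa(q^2)$; thus the entire content of the theorem is that each of these five-term sums collapses to one of its own building blocks.

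Establishing that collapse is the crux and the main obstacle. After rescaling, each summand is a quotient of $G,H$ at the arguments $q,q^2,q^4$ and of $E$ at $q^5,q^{10},q^{20}$, and the five summands do not even share a common total degree in the ``level-$q$'' functions $G(q),H(q),E(q^5)$. I would eliminate those using the quadratic relations \eqref{Hir-iden-1}--\eqref{Hir-iden-3}, which rewrite $G(q)^2$ and $H(q)^2$ up to level-$q^2$ factors, supplemented where needed by the quartic relation \eqref{key-iden-1}; the point is to reorganise the sum so that all dependence on $G(q),H(q),E(q^5)$ cancels and one is left with an identity purely among $G(q^2),H(q^2),G(q^4),H(q^4),E(q^{10}),E(q^{20})$, which is then settled by a final application of \eqref{Hir-iden-1}--\eqref{Hir-iden-3} at $q\mapsto q^2$. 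The bookkeeping is heavy and error-prone, so as a safeguard I would be ready to confirm each of the two resulting eta-quotient identities by realising both sides as modular functions on a suitable congruence subgroup and matching finitely many $q$-coefficients.

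Finally, \eqref{A3} runs entirely parallel to \eqref{A2}, and both deliver Theorem \ref{THM-vanishing} at once: the right-hand side of \eqref{A2} is a power series in $q^2$, so its odd coefficients vanish, giving $\alpha(10n+7)=0$; the right-hand side of \eqref{A3} is $q$ times a power series in $q^2$, so its even coefficients vanish, giving $\alpha(10n+3)=0$.
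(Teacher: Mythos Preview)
Your plan is exactly the paper's: factor $\nu(q^2)=\varkappa(q)\varkappa(q^2)$, insert the $5$-dissection \eqref{CT-iden-1} into each factor, and pick off the residue classes; the paper does this verbatim in \eqref{iden-5A}, with the same index pairs $(j,k)$ you list. Your observation that the claimed right-hand sides are precisely $-\varphi_1(q^2)$ and $-q\,\varphi_4(q^2)$ is correct and helpful.

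Where your sketch falls short is in the collapse step. The toolkit you name, \eqref{Hir-iden-1}--\eqref{Hir-iden-3} and \eqref{key-iden-1} (and their $q\mapsto q^2$ specialisations), consists entirely of \emph{two-level} relations: each one links $G,H,E$ at arguments $q,q^2$ or at $q^2,q^4$, never all three at once. In the paper's computation these relations do most of the work, but after exhausting them a genuinely \emph{three-level} residue remains, namely Lemma~\ref{usful-lemma-2} (identity \eqref{0-iden}), which mixes $G,H$ at $q$, $q^2$ and $q^4$ simultaneously. That residue is not a consequence of your listed identities; the paper proves it separately from the new relation \eqref{CT-iden-5} of Theorem~\ref{th:iden-3} (itself obtained via \eqref{Hir-iden-4.9}, \eqref{BK-iden-1} and \eqref{BK-iden-2}). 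So the elimination you outline will stall at exactly this point, and \eqref{CT-iden-5}/\eqref{0-iden} is the missing key lemma. Your modular-function fallback would of course certify the two final identities, but if you want the elementary route you need this extra three-level input.
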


For its proof, we begin with an auxiliary identity.

\begin{lemma}\label{usful-lemma-2}
	We have
	\begin{align}
	\dfrac{2qG(q)^3H(q)^4H(q^2)^4G(q^4)E(q^{10})^4}
	{E(q^5)^2E(q^{20})^2}&=2qG(q)^4H(q)^4G(q^2)H(q^2)G(q^4)H(q^4)\notag\\
	&\quad-2q^2G(q)^5H(q)^3H(q^2)^2H(q^4)^2.\label{0-iden}
	\end{align}
\end{lemma}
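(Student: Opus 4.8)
The plan is to strip the common factor $2qG(q)^3H(q)^3$ from both sides of \eqref{0-iden} and then reduce to a clean quadratic identity. After this division the claim becomes
\begin{equation*}
H(q)H(q^2)^4G(q^4)\,\frac{E(q^{10})^4}{E(q^5)^2E(q^{20})^2} = G(q)H(q)G(q^2)H(q^2)G(q^4)H(q^4) - qG(q)^2H(q^2)^2H(q^4)^2.
\end{equation*}
To remove the infinite-product factor on the left I would split $\tfrac{E(q^{10})^4}{E(q^5)^2E(q^{20})^2}=\tfrac{E(q^{10})^2}{E(q^5)^2}\cdot\tfrac{E(q^{10})^2}{E(q^{20})^2}$ and replace the two blocks using \eqref{Hir-iden-2} and \eqref{Hir-iden-3} with $q$ replaced by $q^2$; after cancellation the left-hand side collapses to $\tfrac{1}{2q}\bigl(G(q)^2H(q^2)-G(q^2)H(q)^2\bigr)\bigl(G(q^2)G(q^4)^2-q^2H(q^2)H(q^4)^2\bigr)$. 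Expanding the resulting polynomial relation and regrouping the monomials—the three terms carrying the factor $G(q^2)H(q^2)$ assemble into a perfect square—reduces everything to the single identity
\begin{equation*}
G(q^2)H(q^2)\bigl(G(q)G(q^4)-qH(q)H(q^4)\bigr)^2 = H(q)^2G(q^2)^2G(q^4)^2 - q^2G(q)^2H(q^2)^2H(q^4)^2. \tag{$\star$}
\end{equation*}

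The heart of the proof is then to establish $(\star)$, which I would do by showing that both sides equal $\dfrac{E(q^{10})^{11}}{E(q^2)^3E(q^5)^4E(q^{20})^4}$. For the left side this is immediate: the elementary evaluation $G(q)H(q)=E(q^5)/E(q)$, coming straight from the definitions, gives $G(q^2)H(q^2)=E(q^{10})/E(q^2)$, while \eqref{Hir-iden-4.9} supplies the product form of $G(q)G(q^4)-qH(q)H(q^4)$; squaring and multiplying yields the stated quotient. For the right side I would again use $G(q)H(q)=E(q^5)/E(q)$ together with $R=H/G$ to write each squared factor as a $GH$-product times a power of $R$. This turns the right side of $(\star)$ into
\begin{equation*}
\frac{E(q^5)E(q^{10})E(q^{20})}{E(q)E(q^2)E(q^4)}\left(\frac{R(q)}{R(q^2)R(q^4)}-\frac{q^2R(q^2)R(q^4)}{R(q)}\right),
\end{equation*}
at which point \eqref{CT5-step-3} evaluates the bracket and the two $E$-quotients multiply to exactly the same expression as on the left.

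The main obstacle is organizational rather than computational: the whole scheme works only because the regrouping in the first step forces every surviving monomial to carry \emph{even} exponents of $G$ and $H$ at each argument, so that the substitutions $G(q)H(q)=E(q^5)/E(q)$ and $R=H/G$ introduce no square roots and land precisely on the difference $R(q)/(R(q^2)R(q^4)) - q^2R(q^2)R(q^4)/R(q)$ governed by \eqref{CT5-step-3}. A naive term-by-term conversion of \eqref{0-iden} fails here, since the two terms on its right-hand side do not combine into that clean difference. Once $(\star)$ has been isolated the remaining steps are routine product bookkeeping, so the care required lies entirely in the initial factoring and in matching the two $E$-product blocks to \eqref{Hir-iden-2} and the $q\mapsto q^2$ form of \eqref{Hir-iden-3}.
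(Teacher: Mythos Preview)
Your argument is correct and uses exactly the same ingredients as the paper; the difference is purely in packaging. The paper's proof is a three-liner: it multiplies \eqref{CT-iden-5} through by $G(q)^4H(q)^4G(q^2)H(q^2)G(q^4)H(q^4)$, so that the bracket $\bigl(\tfrac{1}{R(q)R(q^4)}+q^2R(q)R(q^4)\bigr)-\bigl(\tfrac{R(q)}{R(q^2)R(q^4)}-\tfrac{q^2R(q^2)R(q^4)}{R(q)}\bigr)-2q$ becomes a polynomial in the $G$'s and $H$'s, then applies \eqref{Hir-iden-2} and \eqref{Hir-iden-3} (the latter with $q\mapsto q^2$) to collapse the resulting product $\bigl(G(q)^2H(q^2)-G(q^2)H(q)^2\bigr)\bigl(G(q^2)G(q^4)^2-q^2H(q^2)H(q^4)^2\bigr)$ into the left-hand side of \eqref{0-iden}. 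Your route runs this backwards: you strip the common factor, use \eqref{Hir-iden-2} and \eqref{Hir-iden-3} to \emph{reach} that same product, and then your identity $(\star)$ is precisely \eqref{CT-iden-5} in disguise---divide $(\star)$ by $G(q)H(q)G(q^2)H(q^2)G(q^4)H(q^4)$ and it becomes literally \eqref{CT-iden-5}. Your verification of $(\star)$ via \eqref{Hir-iden-4.9} and \eqref{CT5-step-3} then retraces the paper's proof of \eqref{CT-iden-5} inside Theorem~\ref{th:iden-3}. So nothing is wrong, but you can shorten your write-up dramatically by citing \eqref{CT-iden-5} directly instead of re-deriving it as $(\star)$.
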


\begin{proof}
	We know from \eqref{CT-iden-5} that
	\begin{align*}
	0&=G(q)^4H(q)^4G(q^2)H(q^2)G(q^4)H(q^4)\\
	&\quad\times\Bigg(\left(\dfrac{1}{R(q)R(q^4)}+q^2R(q)R(q^4)\right)-\left(\dfrac{R(q)}{R(q^2)R(q^4)}
	-\dfrac{q^2R(q^2)R(q^4)}{R(q)}\right)-2q\Bigg)\\
	&=G(q)^3H(q)^3\left(G(q)^2H(q^2)-G(q^2)H(q)^2\right)\left(G(q^2)G(q^4)^2-q^2H(q^2)H(q^4)^2\right)\\
	&\quad -2qG(q)^4H(q)^4G(q^2)H(q^2)G(q^4)H(q^4)+2q^2G(q)^5H(q)^3H(q^2)^2H(q^4)^2.
	\end{align*}
	In view of \eqref{Hir-iden-2} and \eqref{Hir-iden-3}, we further find that
	\begin{align*}
	\dfrac{2qG(q)^3H(q)^4H(q^2)^4G(q^4)E(q^{10})^4}
	{E(q^5)^2E(q^{20})^2}
	&=G(q)^3H(q)^3\left(G(q)^2H(q^2)-G(q^2)H(q)^2\right)\\
	&\quad\times\left(G(q^2)G(q^4)^2-q^2H(q^2)H(q^4)^2\right).
	\end{align*}
	Our desired result therefore follows.
\end{proof}

It follows from \eqref{CT-iden-1} that
\begin{align}
\sum_{n=0}^\infty\alpha(n)q^n
&=\dfrac{R(q)^2}{R(q^2)}\cdot\dfrac{R(q^2)^2}{R(q)^4}\notag\\
&=\Bigg(\dfrac{G(q^5)^2H(q^5)^6}{G(q^{10})H(q^{10})^3}
-\dfrac{2qG(q^5)^4H(q^5)^3E(q^{50})^2}{G(q^{10})H(q^{10})^2E(q^{25})^2}
+\dfrac{4q^2G(q^5)^3H(q^5)^4E(q^{50})^2}{G(q^{10})H(q^{10})^2E(q^{25})^2}\notag\\
&\quad-\dfrac{4q^3G(q^5)^3H(q^5)^3E(q^{50})^4}{H(q^{10})^2E(q^{25})^4}
+\dfrac{2q^4G(q^5)^3H(q^5)^4E(q^{50})^2}{G(q^{10})^2H(q^{10})E(q^{25})^2}
\Bigg)\notag\\
&\times\Bigg(\dfrac{G(q^{10})^2H(q^{10})^6}{G(q^{20})H(q^{20})^3}
-\dfrac{2q^2G(q^{10})^4H(q^{10})^3E(q^{100})^2}
{G(q^{20})H(q^{20})^2E(q^{50})^2}\notag\\
&\quad+\dfrac{4q^4G(q^{10})^3H(q^{10})^4E(q^{100})^2}
{G(q^{20})H(q^{20})^2E(q^{50})^2}
-\dfrac{4q^6G(q^{10})^3H(q^{10})^3E(q^{100})^4}
{H(q^{20})^2E(q^{50})^4}\notag\\
&\quad+\dfrac{2q^8G(q^{10})^3H(q^{10})^4E(q^{100})^2}
{G(q^{20})^2H(q^{20})E(q^{50})^2}\Bigg).\label{iden-5A}
\end{align}

\begin{proof}[Proof of \eqref{A2}]
	We deduce from \eqref{iden-5A} that
	\begin{align*}
	A_2(q):\!&=\sum_{n=0}^\infty\alpha(5n+2)q^n\\
	&=\dfrac{4G(q)^3H(q)^4H(q^2)^4G(q^4)E(q^{10})^2}{E(q^5)^2}\\
	&\quad-\dfrac{2G(q)^2H(q)^6G(q^2)^2G(q^4)H(q^4)E(q^{20})^2}
	{E(q^{10})^2}\\
	&\quad-\dfrac{16qG(q)^3H(q)^3G(q^2)^2H(q^2)^2
		G(q^4)H(q^4)E(q^{10})^2E(q^{20})^2}{E(q^5)^4}\\
	&\quad+\dfrac{8qG(q)^4H(q)^3G(q^2)H(q^2)G(q^4)^2H(q^4)E(q^{20})^4}
	{E(q^5)^2E(q^{10})^2}\\
	&\quad+\dfrac{4q^2G(q)^3H(q)^4H(q^2)^3H(q^4)^2E(q^{20})^2}{E(q^5)^2}.
	\end{align*}
	With the aid of \eqref{Hir-iden-1} and \eqref{Hir-iden-2}, one has
	\begin{align*}
	&\dfrac{G(q^4)^2H(q^4)^3E(q^{10})^2}{G(q^2)E(q^{20})^2}\cdot A_2(q)\\
	&\quad=\dfrac{4G(q)^3H(q)^4H(q^2)^4
		G(q^4)E(q^{10})^4}{E(q^5)^2E(q^{20})^2}\\
	&\quad\quad-2G(q)^2H(q)^6G(q^2)^2G(q^4)H(q^4)\\
	&\quad\quad-4G(q)^2H(q)^2G(q^4)H(q^4)
	\left(G(q)^4H(q^2)^2-G(q^2)^2H(q)^4\right)\\
	&\quad\quad+\dfrac{4qG(q)^4H(q)^3H(q^2)H(q^4)E(q^{10})^2}
	{E(q^5)^2}\left(G(q^2)^2H(q^4)+G(q^4)H(q^2)^2\right)\\
	&\quad\quad+2qG(q)^3H(q)^3H(q^2)H(q^4)^2
	\left(G(q)^2H(q^2)-G(q^2)H(q)^2\right).
	\end{align*}
	We then tactfully rewrite the second last term on the right-hand side using \eqref{Hir-iden-1} and \eqref{Hir-iden-2}:
	\begin{align*}
	&\dfrac{4qG(q)^4H(q)^3H(q^2)H(q^4)E(q^{10})^2}
	{E(q^5)^2}\left(G(q^2)^2H(q^4)+G(q^4)H(q^2)^2\right)\\
	&\quad = 2qG(q)^3H(q)^3H(q^2)H(q^4)^2
	\left(G(q)^2H(q^2)+G(q^2)H(q)^2\right)\\
	&\quad\quad+2G(q)^4H(q)^2H(q^2)G(q^4)H(q^4)
	\left(G(q)^2H(q^2)-G(q^2)H(q)^2\right).
	\end{align*}
	Hence,
	\begin{align*}
	&\dfrac{G(q^4)^2H(q^4)^3E(q^{10})^2}{G(q^2)E(q^{20})^2}\cdot A_2(q)\\
	&\quad=\dfrac{4G(q)^3H(q)^4H(q^2)^4G(q^4)E(q^{10})^4}
	{E(q^5)^2E(q^{20})^2}\\
	&\quad\quad-2G(q)^2H(q)^6G(q^2)^2G(q^4)H(q^4)\\
	&\quad\quad-4G(q)^2H(q)^2G(q^4)H(q^4)
	\left(G(q)^4H(q^2)^2-G(q^2)^2H(q)^4\right)\\
	&\quad\quad+2qG(q)^3H(q)^3H(q^2)H(q^4)^2
	\left(G(q)^2H(q^2)+G(q^2)H(q)^2\right)\\
	&\quad\quad+2G(q)^4H(q)^2H(q^2)G(q^4)H(q^4)
	\left(G(q)^2H(q^2)-G(q^2)H(q)^2\right)\\
	&\quad\quad+2qG(q)^3H(q)^3H(q^2)H(q^4)^2
	\left(G(q)^2H(q^2)-G(q^2)H(q)^2\right).
	\end{align*}
	By virtue of \eqref{key-iden-1}, after simplification, one has
	\begin{align*}
	A_2(q) &=\dfrac{2G(q^2)E(q^{20})^2}{qG(q^4)^2H(q^4)^3E(q^{10})^2}\\
	&\times\Bigg(\dfrac{2qG(q)^3H(q)^4H(q^2)^4
		G(q^4)E(q^{10})^4}{E(q^5)^2E(q^{20})^2}\\
	&\quad-2qG(q)^4H(q)^4G(q^2)H(q^2)G(q^4)H(q^4)+2q^2G(q)^5H(q)^3H(q^2)^2H(q^4)^2\Bigg)\\
	&+\dfrac{2G(q^2)^4H(q^2)^3E(q^{20})^2}
	{G(q^4)H(q^4)^2E(q^{10})^2}.
	\end{align*}
	Finally, \eqref{A2} follows by making use of \eqref{0-iden}.
\end{proof}

\begin{proof}[Proof of \eqref{A3}]
	We deduce from \eqref{iden-5A} that
	\begin{align*}
	A_3(q):=\sum_{n=0}^\infty\alpha(5n+3)q^n&=-\dfrac{4G(q)^3H(q)^3G(q^2)^2H(q^2)^4E(q^{10})^4}
	{G(q^4)H(q^4)^3E(q^5)^4}\\
	&\quad+\dfrac{4G(q)^4H(q)^3G(q^2)^3H(q^2)E(q^{20})^2}
	{G(q^4)H(q^4)^2E(q^5)^2}\\
	&\quad+\dfrac{8qG(q)^3H(q)^4G(q^2)H(q^2)^3E(q^{20})^2}
	{G(q^4)H(q^4)^2E(q^5)^2}\\
	&\quad-\dfrac{16qG(q)^3H(q)^4G(q^2)^2H(q^2)E(q^{10})^6}
	{H(q^4)^2E(q^5)^2E(q^{20})^4}\\
	&\quad+\dfrac{2qG(q)^2H(q)^6G(q^2)^2H(q^2)E(q^{20})^2}
	{G(q^4)^2H(q^4)E(q^{10})^2}.
	\end{align*}
	Applying \eqref{Hir-iden-1} and \eqref{Hir-iden-2}, we have
	\begin{align*}
	A_3(q) &=-\dfrac{4G(q)^3H(q)^3G(q^2)^2H(q^2)^4E(q^{10})^4}
	{G(q^4)H(q^4)^3E(q^5)^4}\\
	&\quad+\dfrac{4G(q)^4H(q)^3G(q^2)^3H(q^2)E(q^{20})^2}
	{G(q^4)H(q^4)^2E(q^5)^2}\\
	&\quad-\dfrac{H(q^2)E(q^{20})^2}{G(q^4)^2H(q^4)^2E(q^{10})^2}\\
	&\qquad\times\Bigg(4G(q)^3H(q)^3G(q^2)G(q^4)
	\left(-G(q)^2H(q^2)+G(q^2)H(q)^2\right)\\
	&\qquad\quad+\dfrac{8qG(q)^3H(q)^4G(q^2)E(q^{10})^2
		\left(G(q^2)^2H(q^4)+G(q^4)H(q^2)^2\right)}{E(q^5)^2}\\
	&\qquad\quad-2qG(q)^2H(q)^6G(q^2)^2H(q^4)\Bigg)\\
	&=-\dfrac{4G(q)^3H(q)^3G(q^2)^2H(q^2)^4E(q^{10})^4}
	{G(q^4)H(q^4)^3E(q^5)^4}\notag\\
	&\quad+\dfrac{4G(q)^4H(q)^3G(q^2)^3H(q^2)E(q^{20})^2}
	{G(q^4)H(q^4)^2E(q^5)^2}\notag\\
	&\quad-\dfrac{H(q^2)E(q^{20})^2}
	{G(q^4)^2H(q^4)^2E(q^{10})^2}\notag\\
	&\qquad\times\Big(4G(q)^3H(q)^3G(q^2)G(q^4)
	\left(-G(q)^2H(q^2)+G(q^2)H(q)^2\right)\notag\\
	&\qquad\quad+4qG(q)^2H(q)^4G(q^2)H(q^4)
	\left(G(q)^2H(q^2)+G(q^2)H(q)^2\right)\notag\\
	&\qquad\quad+4G(q)^3H(q)^3G(q^2)G(q^4)
	\left(G(q)^2H(q^2)-G(q^2)H(q)^2\right)\notag\\
	&\qquad\quad-2qG(q)^2H(q)^6G(q^2)^2H(q^4)\Big).
	\end{align*}
	Substituting \eqref{key-iden-1} into the above identity and using \eqref{Hir-iden-1}, we obtain
	\begin{align*}
	A_3(q) &=-\dfrac{2qG(q^2)^3H(q^2)^4E(q^{20})^2}
	{G(q^4)^2H(q^4)E(q^{10})^2}
	-\dfrac{4G(q)^3H(q)^3G(q^2)^2H(q^2)^4E(q^{10})^4}
	{G(q^4)H(q^4)^3E(q^5)^4}\notag\\
	&\quad+\dfrac{4G(q)^4H(q)^3G(q^2)^3H(q^2)E(q^{20})^2}
	{G(q^4)H(q^4)^2E(q^5)^2}\notag\\
	&\quad
	-\dfrac{2qG(q)^4H(q)^2H(q^2)^2E(q^{20})^2}
	{G(q^4)^2H(q^4)E(q^{10})^2}\left(G(q)^2H(q^2)+G(q^2)H(q)^2\right)\notag\\
	&=-\dfrac{2qG(q^2)^3H(q^2)^4E(q^{20})^2}{G(q^4)^2H(q^4)E(q^{10})^2}
	-\dfrac{2G(q^2)^2E(q^{20})^2}{qH(q)G(q^4)^2H(q^4)^3E(q^5)^2}\notag\\
	&\qquad\times\Bigg(\dfrac{2qG(q)^3H(q)^4
		H(q^2)^4G(q^4)E(q^{10})^4}{E(q^5)^2E(q^{20})^2}\notag\\
	&\qquad\quad-2qG(q)^4H(q)^4G(q^2)H(q^2)G(q^4)H(q^4)\notag\\
	&\qquad\quad+2q^2G(q)^5H(q)^3H(q^2)^2H(q^4)^2\Bigg).
	\end{align*}
	Finally, utilizing \eqref{0-iden} yields \eqref{A3}.
\end{proof}

At the end of this section, we complete the proof of \eqref{vanish-1}.

\begin{proof}[Proof of \eqref{vanish-1}]
	It is a trivial observation that there are no terms of the form $q^{2n+1}$ in the expansion of the right-hand side of \eqref{A2}. Hence, $\alpha(10n+7)=0$. Similarly, there are no terms of the form $q^{2n}$ in the expansion of the right-hand side of \eqref{A3}. This implies that $\alpha(10n+3)=0$.
\end{proof}

\section{Proof of Theorem \ref{THM-vanishing-2}}\label{sect:final-thm-2}

We need the following two necessary identities.

\begin{lemma}
	We have
	\begin{align}
	\left(\begin{matrix}
	q^2,q^8\\
	q^3,q^7
	\end{matrix};q^{10}\right)_\infty & = \frac{G(q^{10})^3 H(q^{10})^2 E(q^{25})^2}{G(q^{5}) H(q^{5}) E(q^{50})^2} + \frac{q^6 G(q^{10})^2 H(q^{10})^4}{G(q^{5})^2 H(q^{5})}\notag\\
	&\quad - \frac{
		q^2 G(q^{10})^2 H(q^{10})^3 E(q^{25})^2}{G(q^{5}) H(q^{5}) E(q^{50})^2} + \frac{q^3 G(q^{10})^2 H(q^{10})^3 E(q^{25})^2}{G(q^{5})^2 E(q^{50})^2}\label{eq:AB-5.2}\\
	\intertext{and}
	\left(\begin{matrix}
	q^4,q^6\\
	q,q^9
	\end{matrix};q^{10}\right)_\infty & = \frac{G(q^{10})^3 H(q^{10})^2 E(q^{25})^2}{H(q^{5})^2 E(q^{50})^2} + \frac{q G(q^{10})^3 H(q^{10})^2 E(q^{25})^2}{G(q^{5}) H(q^{5}) E(q^{50})^2}\notag\\
	&\quad + \frac{
		q^2 G(q^{10})^4 H(q^{10})^2}{G(q^{5}) H(q^{5})^2} + \frac{q^3 G(q^{10})^2 H(q^{10})^3 E(q^{25})^2}{G(q^{5}) H(q^{5}) E(q^{50})^2}.\label{eq:AB-5.4}
	\end{align}
\end{lemma}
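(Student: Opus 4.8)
The plan is to recognize each product as an instance of the Andrews--Bressoud product \eqref{eq:AB} and then to apply McLaughlin's $k$-dissection \eqref{Mcl-iden-new}. With $2k=10$, the first product $\left(\begin{smallmatrix}q^2,q^8\\q^3,q^7\end{smallmatrix};q^{10}\right)_\infty$ is \eqref{eq:AB} at $k=5$, $r=2$, while the second $\left(\begin{smallmatrix}q^4,q^6\\q,q^9\end{smallmatrix};q^{10}\right)_\infty$ is \eqref{eq:AB} at $k=5$, $r=4$; in both cases $k$ and $r$ are coprime and of opposite parity. Specializing \eqref{Mcl-iden-new} to $k=5$ (so $2k^2=50$), I would obtain
$$\left(\begin{matrix}q^r,q^{10-r}\\q^{5-r},q^{5+r}\end{matrix};q^{10}\right)_\infty\left(\begin{matrix}q^{10},q^{10}\\q^5,q^5\end{matrix};q^{10}\right)_\infty=\sum_{j=0}^{4}q^{j(5-r)}\left(\begin{matrix}q^{50},q^{50},q^{5(6-r+2j)},q^{50-5(6-r+2j)}\\q^{5(2j+1)},q^{50-5(2j+1)},q^{5(5-r)},q^{50-5(5-r)}\end{matrix};q^{50}\right)_\infty,$$
a sum of five generalized products in base $q^{50}$. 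Since the target identities carry the plain products on the left, the final step will divide through by the universal factor $\left(\begin{smallmatrix}q^{10},q^{10}\\q^5,q^5\end{smallmatrix};q^{10}\right)_\infty=E(q^{10})^4/E(q^5)^2$.

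The next step is to dispose of the two degenerate summands. For $r=2$ the prefactor is $q^{3j}$ and the relevant numerator parameters are $q^{20+10j}$ and $q^{30-10j}$; at $j=3$ the second parameter is $q^{0}$, so the factor $(q^0;q^{50})_\infty=(1;q^{50})_\infty=0$ kills that summand, and at $j=4$ the pair becomes $(q^{60},q^{-10})$, which lies outside the standard range. I would reflect it using $(q^{-10};q^{50})_\infty=(1-q^{-10})(q^{40};q^{50})_\infty$ and $(q^{60};q^{50})_\infty=(q^{10};q^{50})_\infty/(1-q^{10})$ together with $(1-q^{-10})/(1-q^{10})=-q^{-10}$, so that the pair collapses to $-q^{-10}(q^{10},q^{40};q^{50})_\infty$ and the prefactor $q^{12}$ becomes $-q^{2}$. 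For $r=4$ the prefactor is $q^{j}$ and the numerator parameters are $q^{10+10j}$ and $q^{40-10j}$; here only $j=4$ is degenerate (the $q^{0}$ again forcing vanishing), and no reflection is needed. In both cases exactly four summands survive, and their powers of $q$ already match those of the claimed right-hand sides, namely $q^0,q^3,q^6,q^2$ for $r=2$ and $q^0,q,q^2,q^3$ for $r=4$.

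It then remains to translate each surviving base-$q^{50}$ product, multiplied by the factor $E(q^5)^2/E(q^{10})^4$ coming from dividing off the universal factor, into the functions $G(q^5),H(q^5),G(q^{10}),H(q^{10}),E(q^{25}),E(q^{50})$. For this I would use $G(q^{10})=1/(q^{10},q^{40};q^{50})_\infty$, $H(q^{10})=1/(q^{20},q^{30};q^{50})_\infty$, the residue splittings $1/G(q^5)=(q^5,q^{20},q^{30},q^{45};q^{50})_\infty$ and $1/H(q^5)=(q^{10},q^{15},q^{35},q^{40};q^{50})_\infty$, the odd-part identity $(q^{25};q^{50})_\infty=E(q^{25})/E(q^{50})$, and the residue expansions of $E(q^5)$ and $E(q^{10})$ in base $q^{50}$. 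A term-by-term check then amounts to confirming, for each residue class modulo $50$, that the net exponent of $(q^{\bullet};q^{50})_\infty$ agrees on the two sides; for instance the $j=2$ summand for $r=2$ produces exactly $q^6G(q^{10})^2H(q^{10})^4/(G(q^5)^2H(q^5))$, with the $E(q^5)$ and $E(q^{10})$ factors cancelling completely.

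The main obstacle is precisely this bookkeeping, concentrated in two places: getting the sign and the compensating power of $q$ right for the reflected summand (the source of the $-q^2$ term in \eqref{eq:AB-5.2}), and verifying the residue-by-residue cancellation that converts four base-$q^{50}$ products into the compact $G/H/E$ expressions of \eqref{eq:AB-5.2} and \eqref{eq:AB-5.4}. Everything else is a direct specialization of \eqref{Mcl-iden-new}.
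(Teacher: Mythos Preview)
Your proposal is correct and follows exactly the paper's approach: specialize \eqref{Mcl-iden-new} at $k=5$, $r=2$ for \eqref{eq:AB-5.2} and at $k=5$, $r=4$ for \eqref{eq:AB-5.4}. The paper records only this one-line specialization, whereas you have (correctly) spelled out the bookkeeping—identifying the vanishing summand, reflecting the out-of-range $j=4$ term for $r=2$ to produce the $-q^2$ contribution, and rewriting the surviving base-$q^{50}$ products in terms of $G$, $H$, and $E$.
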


\begin{proof}
	For \eqref{eq:AB-5.2}, we set $k=5$ and $r=2$ in \eqref{Mcl-iden-new}. For \eqref{eq:AB-5.4}, we take $k=5$ and $r=4$.
\end{proof}

Now we are in the position of proving Theorem \ref{THM-vanishing-2}. First, multiplying \eqref{CT-iden-2} by \eqref{eq:AB-5.2} gives
\begin{align*}
\sum_{n=0}^\infty\gamma(n)q^n & = \Bigg(\dfrac{G(q^5)^6H(q^5)^2}{G(q^{10})^3H(q^{10})}
+\dfrac{2qG(q^5)^4H(q^5)^3E(q^{50})^2}{G(q^{10})H(q^{10})^2E(q^{25})^2}\\
&\quad\quad
-\dfrac{4q^7G(q^5)^3H(q^5)^3E(q^{50})^4}{G(q^{10})^2E(q^{25})^4}
-\dfrac{4q^3G(q^5)^4H(q^5)^3E(q^{50})^2}{G(q^{10})^2H(q^{10})E(q^{25})^2}\\
&\quad\quad
-\dfrac{2q^4G(q^5)^3H(q^5)^4E(q^{50})^2}{G(q^{10})^2H(q^{10})E(q^{25})^2}\Bigg)\\
&\times\Bigg(\frac{G(q^{10})^3 H(q^{10})^2 E(q^{25})^2}{G(q^{5}) H(q^{5}) E(q^{50})^2} + \frac{q^6 G(q^{10})^2 H(q^{10})^4}{G(q^{5})^2 H(q^{5})}\\
&\quad\quad - \frac{
	q^2 G(q^{10})^2 H(q^{10})^3 E(q^{25})^2}{G(q^{5}) H(q^{5}) E(q^{50})^2} + \frac{q^3 G(q^{10})^2 H(q^{10})^3 E(q^{25})^2}{G(q^{5})^2 E(q^{50})^2}\Bigg).
\end{align*}
Hence,
\begin{align*}
\sum_{n=0}^\infty\gamma(5n+4)q^{5n+4} & = \dfrac{2qG(q^5)^4H(q^5)^3E(q^{50})^2}{G(q^{10})H(q^{10})^2E(q^{25})^2}\cdot \frac{q^3 G(q^{10})^2 H(q^{10})^3 E(q^{25})^2}{G(q^{5})^2 E(q^{50})^2}\\
&\quad+ \dfrac{4q^7G(q^5)^3H(q^5)^3E(q^{50})^4}{G(q^{10})^2E(q^{25})^4}\cdot \frac{
	q^2 G(q^{10})^2 H(q^{10})^3 E(q^{25})^2}{G(q^{5}) H(q^{5}) E(q^{50})^2}\\
&\quad- \dfrac{4q^3G(q^5)^4H(q^5)^3E(q^{50})^2}{G(q^{10})^2H(q^{10})E(q^{25})^2}\cdot \frac{q^6 G(q^{10})^2 H(q^{10})^4}{G(q^{5})^2 H(q^{5})}\\
&\quad -\dfrac{2q^4G(q^5)^3H(q^5)^4E(q^{50})^2}{G(q^{10})^2H(q^{10})E(q^{25})^2}\cdot \frac{G(q^{10})^3 H(q^{10})^2 E(q^{25})^2}{G(q^{5}) H(q^{5}) E(q^{50})^2}\\
&=0.
\end{align*}
It follows that $\gamma(5n+4)$ is always zero.

\medskip

The proof of \eqref{vanisH(q^{10})-2} is analogous. We only need to multiplying \eqref{CT-iden-1} by \eqref{eq:AB-5.4} and then expand the product. The details will be omitted.

\section{Final remarks}

By similar techniques of proving \eqref{A2} and \eqref{A3}, we are able to show the following results:
\begin{align}
\sum_{n=0}^\infty\alpha(5n+1)q^n &=-\dfrac{2H(q)G(q^2)^3H(q^2)^3E(q^5)^2E(q^{20})^2}
{G(q^4)H(q^4)^2E(q^{10})^4},\\
\sum_{n=0}^\infty\alpha(5n+4)q^n &=-\dfrac{2 G(q^2)^4 H(q^2)^5 E(q^{10})^2}
{G(q) G(q^4)^2 H(q^4)^2 E(q^5)^2},\\
\sum_{n=0}^\infty\beta(5n+1)q^n &=\dfrac{2 G(q^2)^5 H(q^2)^4 E(q^{10})^2}
{H(q) G(q^4)^2 H(q^4)^2 E(q^5)^2},\\
\sum_{n=0}^\infty\beta(5n+2)q^n &=\dfrac{2G(q^2)^4H(q^2)^3E(q^{20})^2}
{G(q^4)H(q^4)^2E(q^{10})^2},\\
\sum_{n=0}^\infty\beta(5n+3)q^n &=-\dfrac{2qG(q^2)^3H(q^2)^4E(q^{20})^2}{G(q^4)^2H(q^4)E(q^{10})^2},\\
\sum_{n=0}^\infty\beta(5n+4)q^n &=-\dfrac{2 G(q) G(q^2)^3 H(q^2)^3 E(q^5)^2 E(q^{20})^2}
{G(q^4)^2 H(q^4) E(q^{10})^4}.
\end{align}
However, it seems that the generating functions for the sequences $\{\alpha(5n)\}$ and $\{\beta(5n)\}$ cannot be simplified to one single theta-quotient.

\medskip

Our numerical experiments also reveal the following sign patterns:
\begin{align}
&\alpha(n)\begin{cases}
>0, &\text{if } n\equiv0,2,5,9\pmod{10},\\
<0, &\text{if } n\equiv1,4,6,8\pmod{10},
\end{cases}\label{pattern-1}\\
&\beta(n)\begin{cases}
>0, &\text{if } n\equiv0,1,2\pmod{10},\\
<0, &\text{if } n\equiv4,5,6,8,9\pmod{10},
\end{cases}\label{pattern-2}
\end{align}
except for $\alpha(6)=\beta(6)=0$. However, the sign patterns \eqref{pattern-1} and \eqref{pattern-2} can hardly be confirmed completely by the above dissections. On the other hand, making use of an asymptotic formula provided by the first author in \cite{Che2019}, we are able to show the validity of \eqref{pattern-1} and \eqref{pattern-2} for
sufficiently large $n$.

\section*{Acknowledgement}

The second author was supported by the Postdoctoral Science Foundation of China (No.~2019M661005).

\bibliographystyle{amsplain}

\end{document}